\newtheorem{thm}{Theorem}
\newtheorem{prop}[thm]{Proposition}
\newtheorem{lemma}[thm]{Lemma}
\newtheorem{claim}{Claim}
\newtheorem*{cor*}{Corollary}
\newtheorem{q}[thm]{Question}
\newtheorem{Rmks}[thm]{Remarks}
\theoremstyle{definition}
\newtheorem{definition}[thm]{Definition}
\newtheorem*{definition*}{Definition}
\newtheorem{exple}[thm]{Example}
\newtheorem*{exple*}{Example}
\newtheorem{remark}[thm]{Remark}
\newtheorem*{rmk}{Remark}
\newtheorem{exples}[thm]{Examples}
\newtheorem*{exples*}{Examples}
\newcommand\xqed[1]{%
	\leavevmode\unskip\penalty9999 \hbox{}\nobreak\hfill
	\quad\hbox{#1}}
\newcommand\demo{\xqed{$\Diamond$}}
\newcommand{\N}{\mathbb{N}}
\newcommand{\Z}{\mathbb{Z}}
\newcommand{\R}{\mathbb{R}}
\newcommand\Om{\Omega}
\newcommand\om{\omega}
\renewcommand\phi{\varphi}
\newcommand\omst{\omega_{\mathrm{st}}}
\newcommand{\Forms}[2]{\Omega^{#1, #2}}
\newcommand{\cat}{\mathcal{C}}
\newcommand{\mcat}{\mathcal{M}}
\newcommand{\mcattilde}{\widetilde{\mathcal{M}}}
\newcommand{\wtilde}[1]{\widetilde{#1}}
\newcommand{\partition}[1]{\mathcal{P}^{#1}}
\newcommand{\cathat}{\widehat{\mathcal{C}}}
\newcommand{\obj}{\mathcal{O}}
\newcommand{\Int}{\mathrm{Int}}
\newcommand{\vol}{\mathrm{Vol}}
\begin{document}
	
\title[Non-target-representable symplectic capacities]{Examples of non-target-representable symplectic capacities}
\author{Yann Guggisberg\and Fabian Ziltener}
\address{Affiliation of Y.~Guggisberg: Utrecht University\\
	mathematics institute\\
	Hans Freudenthalgebouw\\
	Budapestlaan 6\\
	3584 CD Utrecht\\
	The Netherlands
}
\email{y.b.guggisberg@uu.nl}
\address{Affiliation of F.~Ziltener: Utrecht University\\
	mathematics institute\\
	Hans Freudenthalgebouw\\
	Budapestlaan 6\\
	3584 CD Utrecht\\
	The Netherlands
}
\email{f.ziltener@uu.nl}
\thanks{Y.~Guggisberg's work on this publication is part of the project \emph{Symplectic capacities, recognition, discontinuity, and helicity} (project number 613.009.140) of the research programme \emph{Mathematics Clusters}, which is financed by the Dutch Research Council (NWO). We gratefully acknowledge this funding.}

\setcounter{tocdepth}{1}
	
\maketitle

\begin{abstract}
	We give the first concrete examples of symplectic capacities that are not target-representable. This provides some answers to a question by Cieliebak, Hofer, Latschev, and Schlenk.
\end{abstract}

\tableofcontents

\section{Main result and simpler but less interesting examples}
\subsection{Introduction and main result}

The results of this article are concerned with non-target-representable capacities on small weak differential form categories. These generalize symplectic capacities on small symplectic categories. To explain our results, we define the notion of a capacity in the following even more general setup.

Let $\mathcal{C}$ be a small category. We denote by $\obj$ its set of objects. We denote $\R^+:=(0, \infty)$ and fix an $\R^+$-action\footnote{An action of a group $G$ on a small category $\cat$ is a map $\rho:G\times\cat\to\cat$, such that for every $g, h\in G, \rho_g$ is a functor, $\rho_{gh}=\rho_g\rho_h$, and $\rho_e$ is the identity functor, where $e$ denotes the neutral element of $G$.} on $\mathcal{C}$.

\begin{definition}[generalized capacity]\label{def_capacity}
	A \emph{(generalized) capacity}\footnote{In \cite{CHLS07}, a \emph{capacity} is defined to be a generalized capacity that is non-trivial. Since we do not use this condition in this article, we will use the word \emph{capacity} in the sense of \emph{generalized capacity}.} on $\mathcal{C}$ is a map $c:\mathcal{O} \to [0, \infty]$ with the following properties:
	\begin{enumerate}[label = (\roman*)]
		\item \textbf{(monotonicity)} If  $A$ and $B$ are two objects in $\mathcal{O}$ such that there exists a $\mathcal{C}$-morphism from $A$ to $B$, then
		\begin{equation*}
			c(A) \leq c(B).
		\end{equation*}
		\item \textbf{(conformality)} For every $A \in \mathcal{O}$ and every $a \in \R^+$ we have
		\begin{equation*}
			c(aA) = ac(A),\;\footnote{Here, $aA$ denotes the action of $a$ on the object $A$.}
		\end{equation*}
	\end{enumerate}
\end{definition}

To define the notion of an embedding capacity, let $\cathat$ be a category determined by a formula. By this we mean a category whose objects, morphisms, and pairs\footnote{By this, we mean pairs $((f, g), h)$, where $(f, g)$ is the input of the composition map and $h$ is the output, i.e. $f \circ g = h$.} constituting the composition map are given by well-formed logical formulas.

\begin{remark}\label{rmk_zfc}
	This article is based on the Zermelo-Fraenkel axiomatic system with choice (ZFC). In most cases, the collection of objects and morphisms of a category are proper classes and not sets. Therefore, we need to be careful when talking about general categories. However, categories determined by a formula can be dealt with in ZFC. 
	\demo
\end{remark}

We fix an $\R^+$-action on $\widehat{\mathcal{C}}$ that is defined by a logical formula as in \cite[Appendix A, Definition 18]{GZ22}. Let $\cat$ be a small $\R^+$-invariant subcategory of $\cathat$. We denote by $\obj$ its set of objects. \label{setting}

\begin{definition}[embedding capacity]\label{def_embedding_capacity}
	For every object $A$ of $\cathat$, we define the \emph{domain-embedding capacity} for $A$ on $\cat$ to be the map
	\begin{gather*}
		c_A:=c_A^{\obj, \cathat}: \obj \to [0, \infty]\\
		c_A(B):=\sup\left\{a\in(0, \infty)\,\big|\, \exists \,\cathat\text{-morphism } aA \to B\right\}.
	\end{gather*}
	Similarly, for every object $B$ of $\cathat$, we define the \emph{target-embedding capacity} for $B$ on $\cat$ to be the map
	\begin{gather}
		c^B:=c^B_{\obj, \cathat}: \obj \to [0, \infty]\nonumber\\
		c^B(A):=\inf\left\{b\in (0, \infty)\,\big|\, \exists \, \cathat\text{-morphism } A \to bB\right\}.\label{def_target_embedding_cap}
	\end{gather}
	Here we use the conventions $\sup\emptyset = 0$ and $\inf \emptyset=+\infty$.
	
	The maps $c_A$ and $c^B$ are capacities on $\cat$.
\end{definition}

In \cite{CHLS07}, K. Cieliebak, H. Hofer, J. Latschev, and F. Schlenk defined a \emph{symplectic category} to be a subcategory of the category of symplectic manifolds with symplectic embeddings as morphisms, such that for every object $(M, \omega)$ of the subcategory and every $a\in (0, \infty)$ the pair $(M, a\omega)$ is also an object of the subcategory. \footnote{As mentioned in Remark \ref{rmk_zfc}, arbitrary subcategories cannot be dealt with in ZFC. This can be resolved by considering isomorphism-closed categories as defined in \cite[Definition 16]{GZ22}.} In \cite[Problem 1]{CHLS07}, these authors asked the following question for symplectic categories.
\begin{q}\label{question_rep}
	Which capacities on a symplectic category can be represented as $c^{(X, \Om)}$ for a connected symplectic manifold $(X, \Om)$?
\end{q}
\begin{rmk}
	Let $\cat$ be the category whose objects are the connected open sets of $\R^{2n}$ and whose morphisms are the symplectic embeddings. Then, as explained in \cite[Example 2]{CHLS07}, every capacity on $\cat$ can be represented by a possibly uncountable disjoint union $(X, \Om)$ of manifolds. If this union is uncountable, then $(X, \Omega)$ itself is not a manifold, since its topology is not second countable.
	\demo
\end{rmk}

For $n\geq 2$, the set of capacities on the category of symplectic manifolds of dimension $2n$ and symplectic embeddings that are representable by $c^{(X, \Omega)}$ for some symplectic manifold $(X, \Omega)$ has cardinality at most $\beth_1$, see \cite[Corollary 58]{JZ21}. On the other hand, in \cite[Theorem 17]{JZ21}, D. Joksimovi\'{c} and F. Ziltener showed that the cardinality of the set of all capacities on this category is $\beth_2$. This means that almost no capacity is target-representable, even if we allow for disconnected targets. See \cite[theorem on page 8 and Corollary 20]{JZ21}.\footnote{In that article, the authors used a different setup for capacities. Namely, they defined this notion on isomorphism-closed (as defined in Footnote \ref{footnote_isoclosed}) subcategories of the universal form category $\Forms{m}{k}$ (as in Definition \ref{def_univeral_from_cat}). Such a category is not small, except if it is empty. The two theories of capacities are equivalent, if we only consider small weak form categories (as in Definition \ref{def_small_weak_cat}), such that for every pair of objects $A,B$, every $\Forms{m}{k}$-isomorphism from $A$ to $B$ is a morphism in the small weak form category.} In particular, non-target-representable capacities exist.

The present article provides examples of capacities that are not target-representable in the following more general setting.

Let $m, k \in \N_0:=\{0, 1, 2, \dots\}$.
\begin{definition}[universal form category]\label{def_univeral_from_cat}
	We define the \emph{universal form category} \[\Forms{m}{k} \] as follows:
	\begin{itemize}
		\item Its objects are the pairs $(M, \omega)$, where  $M$ is a (smooth) manifold\footnote{All the manifolds in this article are smooth, finite-dimensional, and are allowed to have boundary.} of dimension $m$ and $\omega$ is a differential $k$-form on $M$.
		\item Its morphisms are (smooth) embeddings\footnote{We do not impose any condition involving the boundaries of the manifolds, here.} that intertwine the differential forms.
	\end{itemize}
	For two objects $(M, \omega)$ and $(M', \omega')$, we denote \[(M, \omega) \hookrightarrow (M', \omega')\] iff there exists a morphism between them. We equip $\Forms{m}{k}$ with the following $\R^+$-action. For every $a\in \R$, we define $a\cdot$ to be the functor from $\Forms{m}{k}$ to itself that sends every object $M:=(M, \omega)$ to
\[aM:=(M, a\omega)\]
and every morphism $\varphi:(M, \omega) \hookrightarrow (M', \omega')$ to the same map viewed as a morphism $(M, a\omega) \hookrightarrow (M', a\omega')$.
\end{definition}

\begin{rmk}
	The category $\Forms{m}{k}$ is determined by a logical formula.
	\demo
\end{rmk}

\begin{definition}[small weak $(m, k)$-form category]\label{def_small_weak_cat} A \emph{small weak}\footnote{Here, the word ``weak'' refers to the fact that the subcategory need not be isomorphism-closed. A subcategory $\cat'$ of $\cat$ is isomorphism-closed iff every isomorphism in $\cat$ starting at some object of $\cat'$ is also a morphism  in $\cat'$.\label{footnote_isoclosed}} $(m, k)$-\emph{(differential) form category} is a small $\R^+$-invariant subcategory of $\Forms{m}{k}$.
\end{definition}

The present article provides examples of capacities that are not (closedly) target-representable in the following sense. 
\begin{definition}[(closedly) target-representable capacity]\label{def_target_representable}
	Let $\cat$ be a small weak $(m, k)$-form category. We denote its set of objects by $\obj$. Let $c$ be a capacity on $\cat$. We call $c$ \emph{target-representable}  iff there exists an object $(X, \Omega)$ of $\cathat:=\Forms{m}{k}$, such that $c= c_{\obj, \cathat}^{(X, \Omega)}$ (defined as in \eqref{def_target_embedding_cap}). We call $c$ \emph{closedly target-representable} iff there is such an $(X, \Omega)$ with $d\Omega=0$.
\end{definition}

To state our main result, we need the following notions. Let $k, n \in \N_0$, such that $n\geq 2$.

\begin{definition}[helicity]\label{def_helicity}
	Let $N$ be a closed\footnote{This means compact and without boundary.} $(kn -1)$-dimensional manifold, $O$ an orientation on $N$, and $\sigma$ be an exact $k$-form on $N$. We define the helicity of $(N, O, \sigma)$ to be the integral
	\begin{equation*}
		h(N, O, \sigma) := \int_{N, O} \alpha \wedge \sigma^{\wedge(n-1)},
	\end{equation*}
	where $\alpha$ is an arbitrary primitive of $\sigma$ and $\int_{N, O}$ denotes the integral on $N$ with respect to the orientation $O$. By Lemma 21 in \cite{GZ22} this number is well-defined.
\end{definition}

\begin{Rmks}[helicity]\label{rmk_helicity}
		(See \cite[Remark 31]{JZ21}.)
		\begin{itemize}
			\item Helicity vanishes when $k$ is odd.
			\item Helicity is not well-defined for $n=1$.
			\demo
		\end{itemize}
	
\end{Rmks}

Let $M$ be a $kn$-dimensional (smooth)  manifold with boundary, $O$ an orientation on $M$, and $\omega$ an exact $k$-form on $M$. We define\[I_M:=\left\{\text{connected components of } \partial M\right\}.\]For every $i \in I_M$, we denote by $O_i$ the orientation induced on $i$ and by $\omega_i$ the pullback of $\omega$ to $i$.

Assume that $M$ is compact.
\begin{definition*}
	We define the \emph{boundary helicity} of $(M, O, \omega)$ to be the map $h_M := h_{M, O, \omega}: I_M \to \R$ given by
	\begin{equation*}
		h_{M, O, \omega} (i) := h(i, O_i, \omega_i).
	\end{equation*}
\end{definition*}

\begin{definition*}[maxipotency]
	Assume that $k, n \geq 1$. A $k$-form $\om$ on a $kn$-dimensional manifold $M$ is called \emph{maxipotent} iff $\omega^{\wedge n}$ is nowhere-vanishing. We call an object $(M, \omega)$ of $\Forms{kn}{k}$  maxipotent iff $\omega$ is \mbox{maxi}potent.
\end{definition*}

\begin{rmk}[maxipotent]
	\begin{enumerate}[label = (\roman*)]
		\item\label{rmk_maxipotent_odd} If $n \geq 2$ and $k$ is odd, we have that $\omega \wedge \omega =0$, and hence there are no maxipotent $k$-forms.
		\item\label{rmk_maxipotent_orientation} A maxipotent $k$-form $\omega$ on a $kn$-dimensional manifold induces an orientation on that manifold. The orientation is given by the $kn$-form $\omega^{\wedge n}$, which is nowhere-vanishing, and hence a volume form.
		\demo
	\end{enumerate}
\end{rmk}

Let $k, n\in \N:=\{1, 2, 3, \dots\}$, such that $n\geq 2$. Let $\mcat$ be an uncountable set of compact, 1-connected\footnote{This means connected and simply connected.}, exact, and maxipotent objects of $\Forms{kn}{k}$ with induced volume 1. Assume that every element of $\mcat$ has exactly three boundary components, one with helicity 4, and the other two with  helicity $\leq -1$. Moreover, assume that no two elements of $\mcat$ are isomorphic.

\begin{rmk}
	By Remark \ref{rmk_helicity}, if $k$ is odd, helicity vanishes. Since $\mcat$ is a non-empty set of objects with non-zero boundary helicity, it follows that $k$ is even.
	\demo
\end{rmk}

For every set $X$, we denote by $\mathcal{P}(X)$ its power set. We define the set
\begin{align*}
		\obj_0^{m, k}:=\big\{&(M, \omega) \text{  object of }\Forms{m}{k}\,\big|\,\\
		&\text{The set underlying }M\text{ is a subset of }\mathcal{P}(\N_0).\big\}.\footnotemark
\end{align*}\footnotetext{The reason for requiring the set underlying $M$ to be included in $\mathcal{P}(\N_0)$ is to ensure that $\Forms{m}{k}_0$ is a set. Here we use the ZF axiom of restricted comprehension.}

We denote by $\mcattilde$ the set of compact, 1-connected, exact, maxipotent elements of $\obj_0^{kn, k}$  of volume (strictly) greater than 1, that have exactly three boundary components, one with helicity at least 4, and two with negative helicity adding up to at least $-3$.

\begin{thm}[non-target-representable capacity]\label{thm_sets_fulfill_conditions}
	Let $\cat$ be a small weak $(kn, k)$-form category. We denote by $\obj$ its set of objects. Assume that $\obj$ includes $\mcat\cup \mcattilde$. Then the capacity \[c_{\mcattilde}:=\sup_{\wtilde{M}\in\mcattilde}c_{\wtilde{M}}^{\obj, \Forms{kn}{k}}\] is not closedly target-representable (as in Definition \ref{def_target_representable}).
\end{thm}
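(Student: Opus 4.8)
The plan is to argue by contradiction: suppose $c_{\mcattilde}=c^{(X,\Omega)}_{\obj,\Forms{kn}{k}}$ for some object $(X,\Omega)$ of $\Forms{kn}{k}$ with $d\Omega=0$. The computational backbone is the identity $\vol(N)=\sum_{i}h_N(i)$ -- volume equals the sum of boundary helicities -- valid for every compact, exact, maxipotent object $N$ of dimension $kn$ (apply Stokes to $\alpha\wedge\omega^{\wedge(n-1)}$, using $d\omega=0$). Together with the defining data this shows: every $M\in\mcat$ has boundary helicities $4,a_M,b_M$ with $a_M,b_M\le -1$ and $a_M+b_M=-3$; every $\wtilde M\in\mcattilde$ has boundary helicities $H_1\ge 4$ and $H_2,H_3<0$ with $H_2+H_3\ge -3$.

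First I would prove that $c_{\mcattilde}(M)<1$ for every $M\in\mcat$. For $\wtilde M\in\mcattilde$ and $a>0$ there is no isomorphism $a\wtilde M\cong M$: such an isomorphism would force $a^n=\vol(M)/\vol(\wtilde M)<1$ (because $\vol(\wtilde M)>1$) and also $a^n(H_2+H_3)=a_M+b_M=-3$, whence $H_2+H_3=-3/a^n<-3$, contradicting $\wtilde M\in\mcattilde$. Hence any morphism $a\wtilde M\hookrightarrow M$ has a proper image, so $a^n\vol(\wtilde M)<\vol(M)=1$ and $c_{\wtilde M}(M)\le\vol(\wtilde M)^{-1/n}<1$; taking the supremum over $\wtilde M$ gives only $c_{\mcattilde}(M)\le 1$. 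To get the strict inequality one must look at a near-maximal image $R\subseteq M$ of some $a\wtilde M$, i.e. with $\vol(R)$ close to $1$: decomposing $M\setminus\Int R$ into its connected components and applying the helicity identity to each (all boundary components occurring carry exact forms, so this is legitimate) shows that, once $R$ has a boundary component of helicity close to $4$, its two negative-helicity boundary components must sum to $\le -3$; but $R\cong a\wtilde M$ makes them sum to $a^n(H_2+H_3)\ge -3a^n>-3\vol(R)>-3$, a contradiction. This is the step I expect to be the main obstacle, and it is where the exact numbers $4$, $-3$ and the normalizations $\vol=1$ versus $\vol>1$ are all essential.

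Next, fix $M\in\mcat$ and pick $b_M\in(c_{\mcattilde}(M),1)$, which is non-empty by the previous step; since $c^{(X,\Omega)}(M)=c_{\mcattilde}(M)<b_M$ there is a morphism $\psi_M\colon M\hookrightarrow b_MX$ in $\Forms{kn}{k}$. I claim that $\psi_M(M)$ is then a connected component of $b_MX$. Suppose not; then some boundary component of $\psi_M(M)$ meets $\Int(b_MX)$, and $\psi_M(M)$ can be enlarged inside $b_MX$ by a thin collar along that component: $b_M\Omega$ is maxipotent on a neighbourhood of $\psi_M(M)$ since maxipotency is an open condition, and the restriction of $b_M\Omega$ to the enlargement is exact because $d\Omega=0$ and the inclusion of $\psi_M(M)$ into the enlargement is a homotopy equivalence. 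Choosing the collar small, the enlargement is compact, $1$-connected, exact, maxipotent, of volume $>1$, with three boundary components -- one of helicity $\ge 4$, two of negative helicity summing to $\ge -3$ -- hence, after transporting its underlying set into $\mathcal P(\N_0)$, an element $\wtilde M'$ of $\mcattilde\subseteq\obj$ admitting a morphism into $b_MX$. This gives $1\le c_{\wtilde M'}(\wtilde M')\le c_{\mcattilde}(\wtilde M')=c^{(X,\Omega)}(\wtilde M')\le b_M<1$, absurd. Therefore every boundary component of $\psi_M(M)$ lies in $\partial(b_MX)$, which forces $\psi_M(M)$ to be both open (it is locally all of $b_MX$ at each of its points) and closed (it is compact) in $b_MX$, hence (being connected) a single component $X_M$; in particular $b_M^{-1}M\cong X_M$ as objects of $\Forms{kn}{k}$.

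Finally, $X$ is a manifold, hence second countable, hence has at most countably many connected components, whereas $\mcat$ is uncountable; so there are distinct $M,M'\in\mcat$ with $X_M=X_{M'}$. Then $b_M^{-1}M\cong X_M\cong b_{M'}^{-1}M'$, so $M\cong (b_M/b_{M'})M'$; comparing induced volumes yields $1=(b_M/b_{M'})^n$, so $b_M=b_{M'}$ and $M\cong M'$, contradicting that no two elements of $\mcat$ are isomorphic. Hence $c_{\mcattilde}$ is not closedly target-representable.
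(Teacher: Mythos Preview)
Your overall architecture is sound and in fact organizes the endgame more directly than the paper does. The paper abstracts a non-representability criterion (Proposition~\ref{prop_non_representable}) requiring the \emph{uniform} bound $\sup_{M,\wtilde M}c_{\wtilde M}(M)<1$, and then shows $c_{\mcattilde}\ne c^X$ via a numerical inequality at one well-chosen object. You instead assume $c_{\mcattilde}=c^{(X,\Omega)}$ and observe that if any $\psi_M$ fails to surject onto its component of $X$, then the bulge construction (your step~4, which is essentially the paper's verification of condition~\ref{hyp_nonsurjective_embedding}) manufactures $\wtilde M'\in\mcattilde$ with $1\le c_{\mcattilde}(\wtilde M')=c^{(X,\Omega)}(\wtilde M')\le b_M<1$; hence every $\psi_M$ is onto its component, and pigeonhole on $\pi_0(X)$ forces two non-isomorphic elements of $\mcat$ to coincide. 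This line only needs the \emph{pointwise} bound $c_{\mcattilde}(M)<1$, so it is a genuine simplification of the paper's endgame. (One small caution in step~4: only part of a boundary component may meet $\Int(b_MX)$, so you should add a local bulge as in the paper rather than a full collar.)

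The genuine gap is your step~2. The sketch for the strict inequality does not work as written: decomposing $M\setminus\Int R$ gives, for each component $C$, the inequality $-a^n h_{\wtilde M}(\wtilde i_C)+\sum_{i\in C\cap I_M}h_M(i)\ge 0$ (this is Lemma~\ref{lemma_helicity_inequality}, with Lemma~\ref{lemma_property_partition} ensuring each $C$ contains a unique $\wtilde i_C$), but nothing here forces the two negative boundary helicities of $R$ to sum to $\le -3$; their sum is $a^n(H_2+H_3)\ge -3a^n$ and that is all, and your chain ``$-3a^n>-3\vol(R)$'' is even reversed since $\vol(R)=a^n\vol(\wtilde M)>a^n$. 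The correct route, taken in the paper, is to look at the partition element $J_0$ containing the positive boundary $\wtilde i_0$ of $\wtilde M$: the $J_0$-inequality forces $i_0\in J_0$; summing the inequalities over all $J\ne J_0$ and using $a<1$ gives $\sum_{i\in I_-\setminus J_0}h_M(i)\ge a^n\sum_{\wtilde i\in\wtilde I_-}\wtilde h(\wtilde i)>-3=\sum_{i\in I_-}h_M(i)$, so $J_0$ must also contain some negative boundary component of $M$; feeding this back into the $J_0$-inequality yields $a^n H_1\le 4-1=3$, i.e.\ $a^n\le 3/4$. This uniform bound $c_{\wtilde M}(M)\le(3/4)^{1/n}$ is more than you need and plugs straight into your argument.
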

For a proof, see page \pageref{proof_thm_conditions}.

\begin{rmk}
	For every $\mcat$ as above, the full subcategory of $\Forms{kn}{k}$ consisting of all positive rescalings of elements of $\mcat\cup\mcattilde$ satisfies the hypotheses of Theorem \ref{thm_sets_fulfill_conditions}.
	\demo
\end{rmk}
\begin{definition*}
	 A \emph{small weak symplectic category} is a small weak $(2n, 2)$-form category whose objects are symplectic manifolds.
\end{definition*}

Theorem \ref{thm_sets_fulfill_conditions} has the following immediate consequence.
\begin{cor*}
	Let $\cat$ be a small weak symplectic category whose set of objects includes $\mcat\cup\mcattilde$. Then $c_{\mcattilde}$ is not equal to $c^{(X, \Omega)}$ for any (even disconnected) symplectic manifold $(X, \Omega)$.
\end{cor*}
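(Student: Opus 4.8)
\emph{The corollary follows at once from Theorem~\ref{thm_sets_fulfill_conditions}:} a small weak symplectic category is in particular a small weak $(2n,2)$-form category, the relevant elements of $\mcat\cup\mcattilde$ (being maxipotent exact $2$-forms when $k=2$) are symplectic, a symplectic form is closed, and the hypotheses on $\mcat,\mcattilde$ already force $k$ to be even, so $k=2$ is an admissible case of the theorem; thus a representation $c_{\mcattilde}=c^{(X,\Omega)}$ by a symplectic manifold would be a closed target representation, contradicting the theorem. The substance is therefore Theorem~\ref{thm_sets_fulfill_conditions}, which I would prove in two stages.

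\emph{Stage 1: $c_{\mcattilde}\equiv 1$ on $\mcat$.} For ``$\le$'', let $M\in\mcat$ and let $\phi\colon a\wtilde M\to M$ be a $\Forms{kn}{k}$-morphism with $\wtilde M\in\mcattilde$. Then $\phi^*(\omega_M^{\wedge n})=a^n\,\omega_{\wtilde M}^{\wedge n}$, and since $a^n>0$ and both objects are maxipotent, $\phi$ is orientation preserving for the induced orientations; hence $a^n\vol(\wtilde M)=\int_{\phi(\wtilde M)}\omega_M^{\wedge n}\le\int_M\omega_M^{\wedge n}=1$, and $\vol(\wtilde M)>1$ forces $a<1$. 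So $c_{\wtilde M}(M)\le1$ for every $\wtilde M\in\mcattilde$, whence $c_{\mcattilde}(M)\le1$. (Maxipotency and Stokes give $\vol=\sum_i h_M(i)$, so $\vol(M)=1$ pins the two negative boundary helicities of $M$ to sum to exactly $-3$; this is also why the threshold in the definition of $\mcattilde$ is ``volume $>1$''.) For ``$\ge$'', given $\epsilon>0$, glue onto the negative-helicity boundary component $i_1$ of $M$ a thin external collar carrying an exact maxipotent extension of $\omega_M$ of small volume $\delta$. The result $\wtilde M$ deformation-retracts onto $M$, hence is $1$-connected and exact; it has volume $1+\delta>1$, the helicity of the collared component is raised to $h_M(i_1)+\delta$ (so the two negative helicities now sum to $-3+\delta\ge-3$), and the helicity-$4$ component is untouched; after transporting $\wtilde M$ into $\obj_0^{kn,k}$ we get $\wtilde M\in\mcattilde$. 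It remains to embed $(\wtilde M,a\,\omega_{\wtilde M})$ into $(M,\omega_M)$ as a subregion of almost full volume, for some $a$ close to $(1+\delta)^{-1/n}$; letting $\delta\to0$ then gives $c_{\mcattilde}(M)\ge1$. I expect this nearly-full embedding to be the first genuinely technical point, presumably a Moser-type flexibility statement for exact maxipotent forms with boundary of the kind available in \cite{GZ22}.

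\emph{Stage 2: setting up the contradiction.} Suppose $c_{\mcattilde}=c^{(X,\Omega)}$ for an object $(X,\Omega)$ of $\Forms{kn}{k}$ with $d\Omega=0$. By Stage~1, $c^{(X,\Omega)}(M)=1$ for every $M\in\mcat$, so there are $\Forms{kn}{k}$-morphisms $\phi_{M,b}\colon M\to bX$ for a sequence $b\downarrow1$ but none for any $b<1$; in particular the largest $\Omega$-volume of an image of a morphism $M\to bX$ with $b>0$ is $\sup\{1/b^n:M\hookrightarrow bX\}=1$. Each such image lies in the maxipotent locus $X^{\mathrm{mp}}:=\{x\in X:\Omega_x^{\wedge n}\ne0\}$, which is open in the second-countable manifold $X$, hence a manifold with at most countably many components; since $M$ is connected, each image lies in one component. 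Pigeonholing, first so that a single component carries images of $M$ of $\Omega$-volume arbitrarily close to $1$ and then over the uncountable set $\mcat$, produces one component $Y$ of $X^{\mathrm{mp}}$, with closed maxipotent form $\Xi:=\Omega|_Y$, and an uncountable $\mcat'\subseteq\mcat$ such that every $M\in\mcat'$ admits copies in $Y$ of $\Xi$-volume arbitrarily close to $1$ and no copy in $X$, a fortiori in $Y$, of $\Xi$-volume $>1$.

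\emph{Stage 2: the contradiction (the main obstacle).} The heart of the argument is to turn this configuration into a contradiction. Each copy $\phi_{M,b}(M)\subseteq Y$ is exact and maxipotent with boundary hypersurfaces of $\Xi$-helicities $b^{-n}(4,h_1(M),h_2(M))$, tending as $b\downarrow1$ to the exact values $4$ and two negatives summing to $-3$, and Stokes on this piece recovers its $\Xi$-volume $b^{-n}\to1$. The plan is: (a) show that these copies, being exact and of the maximal possible $\Xi$-volume $1$, exhaust $Y$ up to a null set — this is where the exact helicity values $4$ and $-3$ enter, forcing the copies to ``just fill'' $Y$ rather than leave room — so that $\Xi|_Y$ is exact and $\vol_\Xi(Y)=1$; (b) pass to the limit $b\downarrow1$ to obtain, for each $M\in\mcat'$, an isomorphism of $(M,\omega_M)$ onto a full-volume compact piece of $(Y,\Xi)$; (c) conclude that any two $M,M'\in\mcat'$ satisfy $(M,\omega_M)\cong(M',\omega_{M'})$, contradicting that no two elements of $\mcat$ are isomorphic. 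Steps (a) and (b) — controlling the position of the copies inside $Y$ so that a limit exists, and excluding $\vol_\Xi(Y)>1$ via the helicity ceiling $4$ and the rigidity of exact maxipotent forms with boundary — are where I expect the real work to lie, on a par with the nearly-full embedding of Stage~1; both should ultimately rest on Moser-type results of the sort in \cite{GZ22}.
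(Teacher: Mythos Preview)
Your reduction of the corollary to Theorem~\ref{thm_sets_fulfill_conditions} in the first paragraph is correct and is exactly how the paper handles it (the paper calls it an ``immediate consequence'').

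Your sketch of the theorem, however, breaks at Stage~1. The claim $c_{\mcattilde}\equiv 1$ on $\mcat$ is \emph{false}: the paper shows (Claim~\ref{claim_embedding_cap_is_small}) that $c_{\wtilde M}(M)\le\sqrt[n]{3/4}$ for every $M\in\mcat$ and $\wtilde M\in\mcattilde$, using the helicity inequality (Lemma~\ref{lemma_helicity_inequality}) together with the partition structure of Lemma~\ref{lemma_property_partition}, not just volume. Your volume comparison only yields $a<1$ with no uniform gap, whereas a bound strictly below $1$ is exactly condition~\ref{hyp_small_sup} of Proposition~\ref{prop_non_representable} and is what drives the contradiction. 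Correspondingly, the ``$\ge 1$'' half of your Stage~1 cannot succeed: the nearly-full Moser-type embedding you hope for is obstructed by helicity. This is the reason the paper requires each negative boundary helicity of elements of $\mcat$ to be $\le -1$; that hypothesis forces the $3/4$ bound and rules out embeddings of rescaled $\mcattilde$-elements with factor close to~$1$.

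Because Stage~1 produces the wrong value, your Stage~2 plan (built on $c^X\equiv 1$ on $\mcat$, volume exhaustion of a component $Y$, and passage to a limit to manufacture isomorphisms between distinct elements of $\mcat$) has no foundation. The paper's argument is different and avoids limits: pigeonhole two distinct $M,M'\in\mcat$ into one connected component of $X$; one of the two embeddings is non-surjective, so by Lemma~\ref{lemma_bdy_pt_mapped_to_interior} some boundary point lands in the interior, and one attaches a small bulge there to obtain an $\wtilde{M_0}\in\mcattilde$ that still embeds in the same rescaling of $X$. One then compares the four numbers $c_{\mcattilde}(\wtilde{M_0})\ge 1$, $c^X(\wtilde{M_0})$, $\sup_{M}c_{\mcattilde}(M)$, and $\sup_{M}c^X(M)$; the strict gap $\varepsilon=1-\sup_{M,\wtilde M}c_{\wtilde M}(M)>0$ from the helicity bound is precisely what makes this comparison conclusive.
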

This provides some answer to Question \ref{question_rep}. To our knowledge, this is the first result concerning this question, apart from \cite[theorem on page 8]{JZ21}, which states that almost no normalized symplectic capacity on the category of all symplectic manifolds is domain or target-representable.

\begin{exple}[set $\mcat$ as in Theorem \ref{thm_sets_fulfill_conditions}]\label{exple_rounded_rectangle}
	Let $n\geq 2$. We equip $\R^{2n}$ with the standard symplectic form $\omst$. We give an example of a set $\mcat$ as above, whose elements are submanifolds of $\R^{2n}$. We choose a closed $2n$-dimensional ``rounded rectangle''\footnote{By this we mean a rectangle with rounded corners (of any dimension).} $R$ in $\R^{2n}$ whose volume with respect to $\omst$ is 4. For every $a \in \left[1, \frac{3}{2}\right]$, we choose two closed  ``rounded rectangles'' $R_{0, a}$ and $R_{1,a}$  of volume $a$ and $3-a$, respectively, such that
	\begin{equation*}
		R_{0, a}\cap R_{1,a} = \emptyset \quad \text{and} \quad R_{0, a}\cup R_{1, a} \subseteq \Int(R),
	\end{equation*}
	where $\Int(X)$ denotes the interior of a manifold $X$. We define
	\begin{equation*}
		M_a := R \setminus \big(\Int(R_{0, a})\cup \Int(R_{1, a})\big).
	\end{equation*}
	(See Figure \ref{fig_rectangle} for an illustration.) We define
	\begin{equation*}
		\mcat := \left\{(M_a, \omst|_{M_a})\,\Big|\, a \in \left[1, \frac{3}{2}\right]\right\}.
	\end{equation*}
	Then $\mcat$ is an uncountable set of compact, 1-connected, exact objects of $\Forms{2n}{2}$ with volume 1. By Stokes's theorem for helicity (see Lemma \ref{lemma_stokes_helicity}), for every $a\in\left[1, \frac{3}{2}\right]$, the boundary components of $M_a$ have helicity $4, -a$, and $-3+a$, respectively. If $a$ and $a'$ are such that $M_a$ and $M_{a'}$ are isomorphic, then $\partial M_a$ and $\partial M_{a'}$ are isomorphic as presymplectic manifolds and hence have the same helicities. It follows that $a=a'$. Hence $\mcat$ has the desired properties.
	\begin{figure}[ht]
		\centering
		
	\def\svgwidth{0.7\columnwidth}
	\import{figures/}{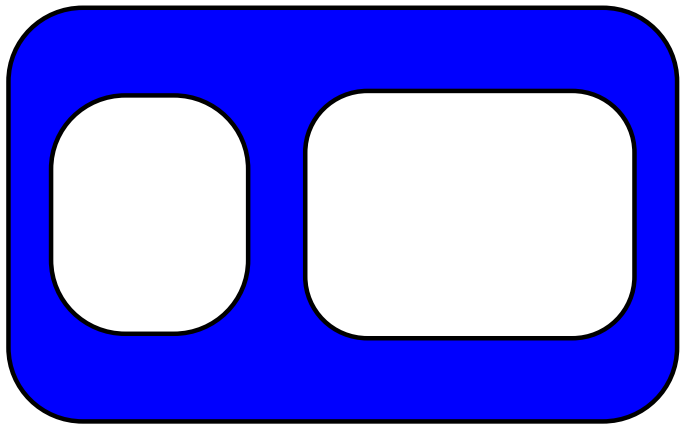}

		\caption{A manifold $M_a$ as in Example \ref{exple_rounded_rectangle}. The outside ``rounded rectangle'' has a volume of 4 while the two inner holes have a combined volume of 3.}
		\label{fig_rectangle}
	\end{figure}
\end{exple}

The idea of the proof of Theorem \ref{thm_sets_fulfill_conditions} is the following. Let $X$ be an object of $\Forms{m}{k}$. Without loss of generality, we may assume that $c^X$ is finite on $\mcat$. Since $\mcat$ is uncountable, $X$ has countably many connected components, and the elements of $\mcat$ are connected, it follows that two elements $M$ and $M'$ of $\mcat$ embed\footnote{For two objects $X_0, X_1$ of $\Forms{m}{k}$, we say that $X_0$ embeds into $X_1$ if there exists an $\Forms{m}{k}$-morphism from $X_0$ to $X_1$.} into the same connected component of $X$ after suitably rescaling them. Since no element of $\mcat$ is isomorphic to another one, one of these embeddings is non-surjective. Without loss of generality, assume that $M$ is the domain of this non-surjective embedding. By adding a bulge to the image of $M$, we construct an element $\wtilde{M_0}$ of $\mcattilde$ that embeds into $X$ with the same rescaling factor as $M$. Figure \ref{fig_bulge} illustrates this construction. Stokes' theorem for helicity (Lemma \ref{lemma_stokes_helicity}) implies that $c_{\mcattilde}(M)$ is small for every $M \in \mcat$. Comparing the values of $c_{\mcattilde}$ and $c^X$ on $\wtilde{M_0}$ and the elements of $\mcat$, it follows that $c_{\mcattilde}$ and $c^X$ are not equal.

\begin{figure}[ht]
	\centering
	
	\def\svgwidth{0.8\columnwidth}
	\import{figures/}{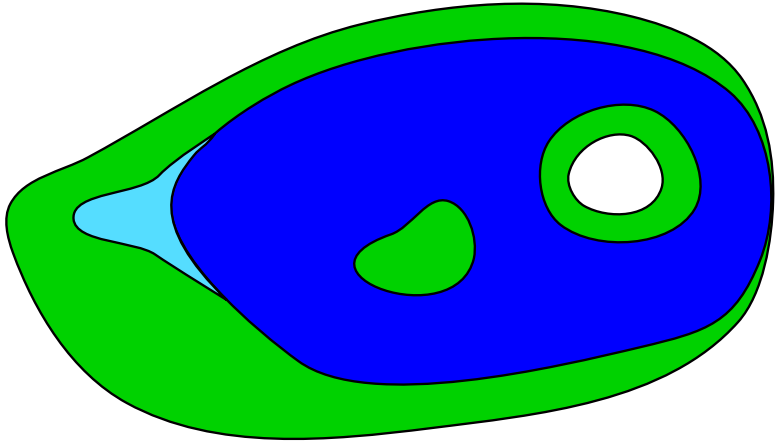}

	\caption{The construction of $\wtilde{M}$ in the proof of Theorem \ref{thm_sets_fulfill_conditions}. The dark blue manifold is $\varphi(M)$. The light blue bulge is  added to construct $\wtilde{M}$. The green manifold is $Y$.}
	\label{fig_bulge}
\end{figure}

\subsection{Simpler but less interesting examples of non-target-representable capacities}

Our next two results provide examples of non-target-representable capacities that are simpler than the capacity in Theorem \ref{thm_sets_fulfill_conditions}, but defined on categories including closed manifolds as objects.

Let $k, n \in \N$.
\begin{prop}[non-target-representable capacity on closed manifolds]\label{prop_disjoint_union_nonrepresentable}
	Let $M:=(M, \omega)$ be a non-empty closed connected object of $\Forms{kn}{k}$, such that $\omega$ is maxipotent. Let $c$ be a capacity on some small weak $(kn,k)$-form category whose objects include all disjoint unions of two (positively) rescaled copies of $M$, such that $c$ is finite on each such union. Then $c$ is not target-representable.
\end{prop}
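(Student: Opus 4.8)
The plan is to assume that $c$ is target-representable, say $c = c^{(X,\Omega)}$ for some object $(X,\Omega)$ of $\Forms{kn}{k}$, and to derive a contradiction from the fact that the manifold $X$ has at most countably many connected components.

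I would first record a rigidity statement for closed connected sources: if $(N,\tau)$ is a closed connected object of $\Forms{kn}{k}$ and $\phi$ is an $\Forms{kn}{k}$-morphism from $(N,\tau)$ into an object $(Y,\eta)$, then $\phi(N)$ is a connected component $Y_0$ of $Y$, this component is closed, and $\phi$ is an isomorphism from $(N,\tau)$ onto $(Y_0,\eta|_{Y_0})$. Indeed, $\phi(N)$ is compact, hence closed in $Y$; and since $N$ is boundaryless and $\dim N = \dim Y$, the inverse function theorem (or invariance of domain applied in charts) shows that $\phi$ is an open map with $\phi(N)\subseteq\Int Y$, so $\phi(N)$ is nonempty, open and closed in $Y$ and therefore equals the component $Y_0$ containing it; then $\phi\colon N\to Y_0$ is a bijective local diffeomorphism, hence a diffeomorphism. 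Applying this with $(N,\tau) = aM := (M,a\omega)$, which is again closed, connected and maxipotent because $M$ is, we obtain: every $\Forms{kn}{k}$-morphism $aM\to b(X,\Omega) = (X,b\Omega)$ has image a connected component $C$ of $X$ and restricts to an isomorphism $(M,a\omega)\cong(C,b\Omega_C)$, where $\Omega_C$ denotes the restriction of $\Omega$ to $C$. In particular $\Omega_C$ is maxipotent and $C$ is compact, so $V(C):=\int_C|\Omega_C^{\wedge n}|$ is a positive real number, and pulling $\Omega_C^{\wedge n}$ back along this isomorphism and integrating absolute values (a diffeomorphism invariant) gives
\[
	\big(a/b\big)^{n}\,V(M) = V(C), \qquad\text{where}\quad V(M):=\int_M|\omega^{\wedge n}|\in(0,\infty).
\]

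Next I would treat the disjoint unions. Fix $s>0$ and write $M\sqcup sM := (M,\omega)\sqcup(M,s\omega)$, which is an object of $\Forms{kn}{k}$. An $\Forms{kn}{k}$-morphism $M\sqcup sM\to(X,t\Omega)$ restricts to morphisms on the two components, so by the previous paragraph the first component $(M,\omega)$ maps onto a component $C_1$ of $X$ with $(1/t)^{n}V(M)=V(C_1)$, and the second component $(M,s\omega)$ maps onto a component $C_2$ with $(s/t)^{n}V(M)=V(C_2)$. Dividing, $s^{n} = V(C_2)/V(C_1)$, so $s$ is determined by the pair $(C_1,C_2)$. Hence the set
\[
	\Sigma := \big\{\, s>0 \ \big|\ M\sqcup sM\to(X,t\Omega)\text{ is an }\Forms{kn}{k}\text{-morphism for some }t>0 \,\big\}
\]
is contained in $\big\{\,(V(C_2)/V(C_1))^{1/n}\ \big|\ C_1,C_2\text{ connected components of }X\,\big\}$, which is at most countable since $X$ has at most countably many connected components.

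Finally, since $c = c^{(X,\Omega)}$ and, by hypothesis, $c$ is finite on every disjoint union of two positively rescaled copies of $M$, we have $c^{(X,\Omega)}(M\sqcup sM) < \infty$ for every $s>0$. By the definition of $c^{(X,\Omega)}$ and the convention $\inf\emptyset = +\infty$, a finite value implies that there is some $t>0$ with an $\Forms{kn}{k}$-morphism $M\sqcup sM\to(X,t\Omega)$, i.e.\ $s\in\Sigma$. Thus $\Sigma = (0,\infty)$, contradicting the countability of $\Sigma$, so $c$ is not target-representable. The only genuinely delicate point is the rigidity statement above — concretely, the standard fact that a smooth embedding of a closed connected manifold into a connected manifold of the same dimension is a diffeomorphism onto the target; the remainder is bookkeeping together with the countability of the set of connected components of a manifold.
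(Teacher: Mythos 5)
Your proposal is correct and follows essentially the same route as the paper: both use invariance of domain to see that a rescaled copy of the closed connected $M$ must map onto a whole connected component of $X$, then use the volume induced by maxipotency to show that the set of admissible scaling ratios $s$ is countable (being determined by pairs of components of $X$), contradicting finiteness of $c$ on all the uncountably many disjoint unions. The only difference is presentational (you argue by contradiction from $c=c^{(X,\Omega)}$, while the paper directly exhibits some $aM\sqcup M$ on which $c^X$ is infinite), so there is nothing substantive to add.
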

For a proof, see page \pageref{proof_disjoint_union}.
\begin{exples}\label{exple_prop}
	The following capacities satisfy the hypotheses of Proposition \ref{prop_disjoint_union_nonrepresentable} and are therefore not target-representable on any category as in this Proposition:
	\begin{itemize}
		\item $c=c_Y$ for any non-empty object $Y$ of $\Forms{kn}{k}$.
		\item The volume capacity, if the given category consists of maxipotent objects.
	\end{itemize}
\end{exples}

The idea of the proof of Proposition \ref{prop_disjoint_union_nonrepresentable} is the following. Assume that the capacity $c$ is target-represented by an object $X$. Since $c(aM\sqcup M)$ is finite for every $a\in (0, \infty)$, it follows that a rescaled copy of every such disjoint union imbeds into $X$. This implies that there is an uncountable set of rescaled copies of $M$ that embed into $X$. Since $M$ is closed, it follows that the image of each copy of $M$ under such an embedding is a connected component of $X$. Since every rescaled copy of $M$ has a different volume, it follows that no two rescaled copies get mapped to the same connected component. This implies that $X$ has uncountably many connected components. Therefore $X$ is not a (second countable) manifold. The statement follows.

Let $k \geq 1$. The next result gives an example of a non-target-representable capacity on a weak small $(2k, k)$-form category that consists of connected objects, which is not the case in Proposition \ref{prop_product_of_surfaces}.

\begin{prop}[non-target-representable capacity on closed connected manifolds]\label{prop_product_of_surfaces}
	Let $M_0$ and $M_1$ be non-empty closed connected $k$-dimensional manifolds, $\sigma_0$ and $\sigma_1$ be volume forms of volume 1 on $M_0$ and $M_1$, respectively, and $c$ be a capacity on some small weak $(2k, k)$-form category whose objects include the products $\big(M_0\times M_1, a_0 \sigma_0\oplus a_1\sigma_1\big)$ for all $a_0, a_1 \in (0, \infty)$, such that $c$ is finite on each such product. Then $c$ is not target-representable.
\end{prop}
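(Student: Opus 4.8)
The plan is to argue by contradiction, following the same overall strategy as the proof of Proposition~\ref{prop_disjoint_union_nonrepresentable}, but with an extra ingredient that is needed because the relevant objects are now connected. Suppose $c = c^{(X,\Omega)}_{\obj, \Forms{2k}{k}}$ for some object $(X,\Omega)$ of $\Forms{2k}{k}$. Let $\pi_0,\pi_1$ be the projections of $M_0\times M_1$ onto its two factors, and for $c_0,c_1\in(0,\infty)$ write
\[
	P_{c_0,c_1} := \bigl(M_0\times M_1,\ \pi_0^*(c_0\sigma_0)+\pi_1^*(c_1\sigma_1)\bigr) = (M_0\times M_1,\ c_0\sigma_0\oplus c_1\sigma_1),
\]
a closed connected object of $\Forms{2k}{k}$; by hypothesis $P_{a_0,a_1}$ lies in $\cat$ and $c(P_{a_0,a_1})=c^{(X,\Omega)}(P_{a_0,a_1})<\infty$ for all $a_0,a_1$. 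Fixing $a_0=1$ and an arbitrary $t>0$, finiteness of $c^{(X,\Omega)}(P_{1,t})$ yields $b>0$ and an $\Forms{2k}{k}$-morphism $P_{1,t}\to(X,b\Omega)$; applying the $\R^+$-action by $b^{-1}$ turns this into an $\Forms{2k}{k}$-morphism $P_{1/b,\,t/b}\to(X,\Omega)$. Hence the set
\[
	S := \bigl\{(c_0,c_1)\in(0,\infty)^2 \,\bigm|\, P_{c_0,c_1}\ \text{embeds into}\ (X,\Omega)\bigr\}
\]
contains a point of every ratio $c_1/c_0=t\in(0,\infty)$, so $S$ is uncountable. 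For $(c_0,c_1)\in S$ the image of an embedding $P_{c_0,c_1}\hookrightarrow X$ is open (invariance of domain, as $M_0\times M_1$ has empty boundary) and closed (compactness), hence a connected component of $X$, and so $P_{c_0,c_1}$ is $\Forms{2k}{k}$-isomorphic to that component. Since $X$ is a manifold it has at most countably many components, so by the pigeonhole principle there is an uncountable $S_0\subseteq S$ and a fixed component $Y$ of $X$ with $P_{c_0,c_1}\cong Y$ for all $(c_0,c_1)\in S_0$; in particular all these objects are mutually isomorphic.

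The crucial step is then to contradict this by producing an $\Forms{2k}{k}$-isomorphism invariant of $P_{c_0,c_1}$ that determines $(c_0,c_1)$ up to at most countably many possibilities. The volume $\vol(P_{c_0,c_1})=c_0c_1$ is such an invariant, but it is too coarse: it only confines $S_0$ to a hyperbola. The finer invariant I would use is the period subgroup
\[
	\Lambda(c_0,c_1) := \bigl\{\,\langle\,[\pi_0^*(c_0\sigma_0)+\pi_1^*(c_1\sigma_1)],\ \gamma\,\rangle \,\bigm|\, \gamma\in H_k(M_0\times M_1;\Z)\,\bigr\}\ \subseteq\ \R.
\]
An isomorphism of $\Forms{2k}{k}$-objects intertwines the $k$-forms, hence their de Rham classes, and induces a bijection on integral homology; therefore $\Lambda$ is an isomorphism invariant, and so it is constant on $S_0$. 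To evaluate it I would use that $\sigma_0,\sigma_1$ have integral $1$, so $[\sigma_0],[\sigma_1]$ are integral and generate $H^k(M_0;\Z)/\mathrm{tors}$ and $H^k(M_1;\Z)/\mathrm{tors}$, together with the fact that $\bigl((\pi_0)_*,(\pi_1)_*\bigr)\colon H_k(M_0\times M_1;\Z)\to H_k(M_0;\Z)\oplus H_k(M_1;\Z)\cong\Z\oplus\Z$ is surjective (it sends $[M_0]\times[\mathrm{pt}]$ and $[\mathrm{pt}]\times[M_1]$ to $(1,0)$ and $(0,1)$). Pairing the class of $\pi_0^*(c_0\sigma_0)+\pi_1^*(c_1\sigma_1)$ with $\gamma$ then equals $c_0p+c_1q$ with $(p,q)$ ranging over all of $\Z^2$, whence $\Lambda(c_0,c_1)=c_0\Z+c_1\Z$.

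Finally I would check that the map $(c_0,c_1)\mapsto c_0\Z+c_1\Z$ has countable fibres on $(0,\infty)^2$. If $c_0/c_1\in\Q$, the group $L:=c_0\Z+c_1\Z$ is infinite cyclic with positive generator $g$, and $(c_0,c_1)=(gp,gq)$ for a coprime pair of positive integers $(p,q)$, of which there are only countably many. If $c_0/c_1\notin\Q$, then $c_0,c_1$ are $\Q$-independent and generate $L$, hence form a $\Z$-basis of the rank-$2$ free abelian group $L$; since any two $\Z$-bases of such a group differ by an element of $\mathrm{GL}_2(\Z)$, the fibre over $L$ is contained in a single $\mathrm{GL}_2(\Z)$-orbit in $(0,\infty)^2$ and is therefore countable. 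In either case the fibre is countable, so $S_0$, being contained in one such fibre, is countable — contradicting that $S_0$ is uncountable. This yields the statement.

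I expect the main obstacle to be precisely the construction and computation of the invariant $\Lambda$: recognizing that volume is insufficient, identifying the period subgroup as the right refinement, and carrying out the cohomological computation $\Lambda(c_0,c_1)=c_0\Z+c_1\Z$ via Künneth together with the integrality of $[\sigma_0]$ and $[\sigma_1]$ — plus the elementary but slightly fussy argument that this subgroup pins down $(c_0,c_1)$ up to a countable ambiguity. By contrast, the topological input (a closed connected codimension-$0$ submanifold is a whole component, so countably many components forces a pigeonhole) is routine and parallels the proof of Proposition~\ref{prop_disjoint_union_nonrepresentable}.
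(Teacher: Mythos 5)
Your proposal is correct and follows essentially the same route as the paper: finiteness of $c$ forces uncountably many pairwise non-isomorphic rescaled products to embed into $X$, closedness and connectedness force each image to be an entire connected component, and the products are distinguished up to isomorphism by the integrality of the periods of $c_0\sigma_0\oplus c_1\sigma_1$. Your period subgroup $c_0\Z+c_1\Z$ is exactly the invariant the paper extracts via the Degree Theorem applied to a slice $M_0\times\{x_1\}$; the only organizational difference is that the paper combines the volume of the component (to recover the scaling factor) with a single period to get countability, whereas you argue directly that $(c_0,c_1)\mapsto c_0\Z+c_1\Z$ has countable fibres.
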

For a proof, see page \pageref{proof_product}.

The idea of the proof of Proposition \ref{prop_product_of_surfaces} is the following. It follows from the Degree Theorem that there exist uncountably many products of the form $\big(M_0\times M_1, a_0 \sigma_0\oplus a_1\sigma_1\big)$ that are not isomorphic, even after rescaling. Since $c$ is finite on these products, it follows as in the proof of Proposition \ref{prop_disjoint_union_nonrepresentable}, that any $X$ that target-represents $c$, has uncountably many connected components.

\begin{exples*}
	The capacities in Example \ref{exple_prop} also fulfill the hypotheses of Proposition \ref{prop_product_of_surfaces} and hence are not target-representable on any category as in this proposition.
\end{exples*}

The following remark provides an example of non-target-representable capacity that is even simpler, but defined on a non-full subcategory of $\Forms{2}{2}$.
\begin{rmk}[non-target-representable capacity on the restriction category of bounded open subsets of $\R^2$]
	We denote by $\omst$ the standard symplectic form on $\R^2$. We define a category $\cat$ whose objects are the bounded open subsets of $\R^2$ equipped with $\omst$ and whose morphisms between two objects $U$ and $U'$ are given by the set \[\left\{\varphi|_U\,\big|\, \varphi \text{ is a symplectomorphism of }\R^2 \text{ and }\varphi(U)\subseteq U'\right\}.\] We define the \emph{bounded hull} of an object $U$ of $\cat$ to be the union of $U$ and the bounded components of $\R^2\setminus U$. For every object $U$ of $\cat$, we define $c(U)$ to be the area (Lebesgue measure) of the bounded hull of $U$. Since the bounded hull construction is monotone, $c$ is a capacity on $\mathcal{C}$. For $a>0$, we denote by $B(a)$ ($\overline{B}(a)$) the open (closed) ball of area $a$ around 0 in $\R^2$. The sets $B(2)\setminus\overline{B}(1)$ and $B(1)\setminus\{0\}$ are isomorphic in $\Forms{2}{2}$. This implies that for every object $X$ of $\Forms{2}{2}$, we have that $c^X(B(2)\setminus\overline{B}(1))=c^X(B(1)\setminus\{0\})$. However, we have that $c(B(2)\setminus\overline{B}(1))=2$ and $c(B(1)\setminus\{0\})=1$. Hence $c$ is not target-representable.
	\demo
\end{rmk}

Here is a silly example of non-target-representable capacity.
\begin{rmk}[non-target-representable capacity on a discrete category]
	Let $A$ and $B$ be two isomorphic objects of $\Forms{m}{k}$. We consider the small weak $(m, k)$-form category whose objects are rescaled copies of $A$ and $B$ and whose morphisms are the identity morphisms. Then any $\R^+$-equivariant function $c$ on this small category is a capacity. However,  if $c(A)\neq c(B)$ then $c$ is not target-representable, since $c^{(X, \Omega)}(A)=c^{(X, \Omega)}(B)$ for every object $(X, \Omega)$ of $\Forms{m}{k}$.
	\demo
\end{rmk}

\subsection*{Organization of the article}
In Section \ref{section_proofs}, we prove Theorem \ref{thm_sets_fulfill_conditions} and Propositions \ref{prop_disjoint_union_nonrepresentable} and \ref{prop_product_of_surfaces}. Appendix \ref{section_appendix} deals with the general Cartan formula and Stokes' Theorem for helicity.

\section{Proofs of the results}\label{section_proofs}

For the proof of Theorem \ref{thm_sets_fulfill_conditions}, we need the following. For two objects $(M, \omega)$ and $(M', \omega')$ of $\Forms{m}{k}$, we write \[(M, \omega)\cong(M', \omega')\] iff the two manifolds are isomorphic. We write \[(M, \omega)\sim (M', \omega')\] iff there exists a number $a\in(0, \infty)$, such that $(M, a\omega)\cong (M', \omega')$. The main ingredient of the proof of Theorem \ref{thm_sets_fulfill_conditions} is the following proposition.
	
\begin{prop}[criteria for non-representability]\label{prop_non_representable}
	Let $k,m \in \N_0$ and $\mcat, \mcattilde$ be sets consisting of objects of $\Forms{m}{k}$ with the following properties:
	\begin{enumerate}[label = (\alph*)]
		\item\label{hyp_conn} The elements of $\mcat$ are connected.
		\item\label{hyp_uncountable} $\mcat$ is uncountable.
		\item\label{hyp_no_rescaled_copy} $\forall M, M' \in\mcat$: if $M\sim M'$, then $M=M'$.
		\item\label{hyp_small_sup} $\sup\left\{c_{\wtilde{M}}(M)\,\Big|\,M\in\mcat, \wtilde{M}\in\mcattilde\right\}<1.$
		\item\label{hyp_nonsurjective_embedding} For every connected object $Y$ of $\Forms{m}{k}$ whose second component is a closed differential form the following holds. If there exists an element $M\in\mcat$ and a nonsurjective morphism $M\hookrightarrow Y$, then there exists $\wtilde{M}\in\mcattilde$ and a morphism $\wtilde{M} \hookrightarrow Y$.
	\end{enumerate}
Then the capacity
	\begin{equation*}
		c_{\mcattilde}:=\sup_{\wtilde{M}\in\mcattilde}c_{\wtilde{M}}
	\end{equation*}
	is not closedly target-representable.
\end{prop}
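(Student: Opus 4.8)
The plan is to argue by contradiction. Suppose $c_{\mcattilde} = c^{(X,\Omega)}_{\obj, \Forms{m}{k}}$ for some object $(X,\Omega)$ of $\Forms{m}{k}$ with $d\Omega = 0$. First I would observe that, by hypothesis \ref{hyp_small_sup}, $c_{\mcattilde}(M) < 1$ for every $M \in \mcat$; in particular $c^{(X,\Omega)}(M)$ is finite on $\mcat$, which means that for each $M \in \mcat$ there exists a (finite) $b > 0$ and a $\Forms{m}{k}$-morphism $M \hookrightarrow b(X,\Omega) = (X, b\Omega)$. Rescaling, this is the same as a morphism $\frac{1}{b} M \hookrightarrow (X,\Omega)$. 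Since the manifold underlying $X$ is second countable, it has at most countably many connected components; since each $M \in \mcat$ is connected (hypothesis \ref{hyp_conn}), the image of each such embedding lies in a single component of $X$.

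Next I would use the uncountability of $\mcat$ (hypothesis \ref{hyp_uncountable}) together with the pigeonhole principle: there must exist two distinct elements $M, M' \in \mcat$ and scalars $a, a' > 0$ such that $aM$ and $a'M'$ both embed into the \emph{same} connected component $Y$ of $(X,\Omega)$. Here $Y$ is a connected object of $\Forms{m}{k}$ whose second component is $\Omega|_Y$, which is closed since $d\Omega = 0$. Now I claim at least one of these two embeddings is nonsurjective: if both $aM \hookrightarrow Y$ and $a'M' \hookrightarrow Y$ were surjective, they would be isomorphisms, whence $aM \cong Y \cong a'M'$, giving $M \sim M'$, and hypothesis \ref{hyp_no_rescaled_copy} would force $M = M'$, contradicting distinctness. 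So, after relabeling, we may assume $aM \hookrightarrow Y$ is nonsurjective for some $M \in \mcat$; equivalently there is a nonsurjective morphism $M \hookrightarrow \frac{1}{a} Y$, and $\frac{1}{a}Y$ is again a connected object of $\Forms{m}{k}$ with closed second component.

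At this point hypothesis \ref{hyp_nonsurjective_embedding} applies (to the connected object $\frac{1}{a}Y$ with its closed form): it yields some $\wtilde{M} \in \mcattilde$ and a morphism $\wtilde{M} \hookrightarrow \frac{1}{a}Y$, i.e.\ a morphism $a\wtilde{M} \hookrightarrow Y \hookrightarrow (X,\Omega)$. Composing, $a\wtilde{M}$ embeds into $(X,\Omega)$, so $c^{(X,\Omega)}(\wtilde{M}) \geq a$. But $a M$ also embeds into $(X,\Omega)$ — wait, more carefully: we need a contradiction with the value of $c_{\mcattilde}$ on some object. The cleanest route is to compare on $\wtilde{M}$ itself: on one hand $c_{\mcattilde}(\wtilde{M}) \geq c_{\wtilde{M}}(\wtilde{M})$, and $c_{\wtilde{M}}(\wtilde{M}) \geq 1$ since the identity gives a morphism $1\cdot\wtilde{M} \hookrightarrow \wtilde{M}$; hence $c_{\mcattilde}(\wtilde{M}) \geq 1$. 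On the other hand, I would instead compare on $M$: since $aM \hookrightarrow Y \hookrightarrow (X,\Omega)$ we get $c^{(X,\Omega)}(M) \geq a$, and since $a\wtilde{M} \hookrightarrow (X,\Omega)$, combined with monotonicity of $c^{(X,\Omega)}$, one gets a lower bound forcing $c^{(X,\Omega)}(M)$ to be at least as large as a quantity governed by how $\wtilde{M}$ sits relative to $M$. The precise contradiction: $c^{(X,\Omega)}(M) = c_{\mcattilde}(M) < 1$ by \ref{hyp_small_sup}, yet the embedding $a M \hookrightarrow (X,\Omega)$ only gives $c^{(X,\Omega)}(M) \geq a$ with no immediate contradiction unless $a \geq 1$. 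The resolution is to run the pigeonhole argument on the rescaled family $\{\tfrac{1}{b_M} M\}_{M\in\mcat}$ where $b_M := c^{(X,\Omega)}(M) + \eps_M$ can be taken close to the embedding threshold, arranging $a$ to be as close to $1$ as we like — or, more simply, to note that $\wtilde{M}$ and $M$ being simultaneously embeddable into $(X,\Omega)$ at scale $a$, together with $c_{\mcattilde}(M)<1\le c_{\wtilde M}(\wtilde M)\le c_{\mcattilde}(\wtilde M)$, contradicts $c_{\mcattilde}=c^{(X,\Omega)}$ via monotonicity once we check $c^{(X,\Omega)}(M)\ge c^{(X,\Omega)}(\wtilde M)$ is \emph{false} in general — hence the comparison must instead be extracted from $c_{\wtilde M}(M) < 1$ directly: since $a\wtilde M\hookrightarrow (X,\Omega)$ and $aM\hookrightarrow(X,\Omega)$, and we want these to contradict each other. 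I expect the main obstacle — and the step requiring the most care — to be exactly this bookkeeping of rescaling factors: one must choose the embedding scales for the elements of $\mcat$ essentially optimally (using $\sup\emptyset$/$\inf\emptyset$ conventions and an $\eps$ of room) so that the scale $a$ produced by pigeonhole can be taken $> \sup_{M,\wtilde M} c_{\wtilde M}(M)$'s complementary bound, making the embedding $a\wtilde M\hookrightarrow (X,\Omega)$ incompatible with $c_{\mcattilde}(\wtilde M)=c^{(X,\Omega)}(\wtilde M)$ and hypothesis \ref{hyp_small_sup}. The topological input (second countability $\Rightarrow$ countably many components) and hypotheses \ref{hyp_conn}--\ref{hyp_no_rescaled_copy} are routine; \ref{hyp_nonsurjective_embedding} is invoked as a black box; the quantitative endgame with \ref{hyp_small_sup} is where the real work lies.
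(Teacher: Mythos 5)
Your skeleton matches the paper's proof: finiteness of $c^{(X,\Omega)}$ on $\mcat$, pigeonhole over the countably many connected components of $X$, non-surjectivity of at least one of the two embeddings via hypothesis \ref{hyp_no_rescaled_copy}, hypothesis \ref{hyp_nonsurjective_embedding} applied to the common component to produce $\wtilde{M_0}\in\mcattilde$, and the bound $c_{\mcattilde}(\wtilde{M_0})\geq c_{\wtilde{M_0}}(\wtilde{M_0})\geq 1$. But the argument is not complete: you explicitly leave open the ``quantitative endgame,'' which is the only place hypothesis \ref{hyp_small_sup} enters, and your gestures toward it are muddled. In particular an inequality direction is wrong: a morphism $aM\hookrightarrow X$, i.e.\ $M\hookrightarrow \frac{1}{a}X$, gives $c^{(X,\Omega)}(M)\leq \frac{1}{a}$, not the lower bound $c^{(X,\Omega)}(M)\geq a$ you write; and ``arranging $a$ to be as close to $1$ as we like'' is neither needed nor available. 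Without this step there is no contradiction, so this is a genuine gap.

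The missing step is short once the scales are chosen correctly. Under the assumption $c_{\mcattilde}=c^{(X,\Omega)}$, hypothesis \ref{hyp_small_sup} gives $\sup_{M\in\mcat}c^{(X,\Omega)}(M)=\sup_{M\in\mcat}c_{\mcattilde}(M)=1-\eps$ with $\eps>0$, so for \emph{every} $M\in\mcat$ one may choose a morphism $M\hookrightarrow b_M X$ with $b_M<1$. The pigeonhole then yields a non-surjective morphism $M\hookrightarrow b_M Y$ for some such $M$ and component $Y$, hypothesis \ref{hyp_nonsurjective_embedding} yields $\wtilde{M_0}\hookrightarrow b_M Y\subseteq b_M X$, hence $c^{(X,\Omega)}(\wtilde{M_0})\leq b_M<1\leq c_{\mcattilde}(\wtilde{M_0})$ --- the contradiction. (The paper phrases this without assuming equality: writing $a_0:=\sup_{M}c^{X}(M)$ and choosing $a_M<a_0+\eps$, it shows $c_{\mcattilde}(\wtilde{M_0})-c^{X}(\wtilde{M_0})-\sup_{M}c_{\mcattilde}(M)+\sup_{M}c^{X}(M)\geq 1-a_M+\eps-1+a_0>0$, whereas this combination would vanish if the two capacities agreed.) The point you missed is that the embedding scale for each $M\in\mcat$ must be taken within $\eps$ of the threshold $\sup_M c^{(X,\Omega)}(M)$, where $\eps$ is the slack provided by \ref{hyp_small_sup}; the contradiction is then read off at the single object $\wtilde{M_0}$, not at any $M\in\mcat$.
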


For a proof, see page \pageref{proof_prop_non_representable}.

\begin{remark}[criteria for non-representability]\label{rmk_weaker_assumption}
	If condition \ref{hyp_nonsurjective_embedding} holds for \emph{every} connected object $Y$ (not only those with closed differential forms), then the same proof as that of Proposition \ref{prop_non_representable} shows that $c_{\mcattilde}$ is not target-representable. However, we will not need this version of the proposition.
	\demo
\end{remark}

\begin{rmk}
	If $m=0$, then $k=0$ by conditions \ref{hyp_conn} and \ref{hyp_uncountable}. Condition \ref{hyp_no_rescaled_copy} then implies that $\mcat$ has at most three elements, which contradicts condition \ref{hyp_uncountable}. It follows that the statement of Proposition \ref{prop_non_representable} is void for $m=0$.
	\demo
\end{rmk}

\begin{remark}\label{rmk_a_0}
Hypothesis \ref{hyp_small_sup} implies that for every manifold $X$ that target-represents $c_{\mcattilde}$, we have
\begin{equation*}
	\sup_{M\in\mcat}c^X(M) = \sup\left\{c_{\wtilde{M}}(M)\,\big|\,M\in\mcat, \wtilde{M}\in\mcattilde\right\}<1.
\end{equation*}
\demo
\end{remark}

Let $\varphi: M \to M'$ be an embedding between two topological manifolds. We denote
\begin{align*}
	&I := I_M := \{\text{connected components of }\partial M\},\\
	&I' := I_{M'},\\
	& P := M' \setminus \varphi(\mathrm{Int}(M)).
\end{align*}

We denote by
\begin{equation*}
	\psi: I \sqcup I' \to \mathcal{P}(P)
\end{equation*}
the map induced by $\varphi$ on $I$ and given by the inclusion on $I'$. We define a partition
\begin{equation}\label{eqn_partiton}
	\mathcal{P}^{\varphi}
\end{equation}
on $I \sqcup I'$ by declaring that two boundary components $i, j \in  I \sqcup I'$ lie in the same element of $\mathcal{P}^{\varphi}$ if and only if there is a continuous path in $P$ that starts in $\psi(i)$ and ends in $\psi(j)$.

The following three lemmas will also be used in the proof of Theorem \ref{thm_sets_fulfill_conditions}.

\begin{lemma}[partition induced by an embedding]\label{lemma_property_partition}
	Assume that $M$ and $M'$ have the same dimension, are compact, connected and that $M \neq \emptyset$. Suppose also that $M'$ is 1-connected. Then we have for every $J \in \mathcal{P}^{\varphi}, |J \cap I| =1$.
\end{lemma}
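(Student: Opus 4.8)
The plan is to use a Mayer–Vietoris / Alexander-duality type argument, exploiting the hypothesis that $M'$ is $1$-connected together with the fact that $M'$, $M$ are compact connected manifolds of the same dimension $n$. Write $V := \varphi(\mathrm{Int}(M))$, an open subset of $M'$ diffeomorphic to $\mathrm{Int}(M)$, and recall $P = M' \setminus V$. The elements of $\partition{\varphi}$ are by definition the ``clusters'' of boundary components that are joined by paths in $P$; equivalently they are in bijection with the connected components of $P$ that meet $\psi(I\sqcup I')$. The first thing I would check is that in fact \emph{every} connected component of $P$ contains at least one element of $I' = I_{M'}$ or touches $\varphi(\partial M)$: since $M'$ is connected and $P$ is the complement of the open set $V$, a component of $P$ disjoint from $\varphi(\partial M)$ would be a clopen subset of $M'$, forcing $M' = P$, i.e.\ $V=\emptyset$, contradicting $M\neq\emptyset$. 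So each element of $\partition{\varphi}$ corresponds to a genuine component of $P$.

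The heart of the matter is to show $|J\cap I|=1$ for each such $J$; equivalently, each connected component of $P$ meets $\varphi(\partial M)$ in exactly one connected component of $\varphi(\partial M)\cong \partial M$. The key step is to rule out a component $C$ of $P$ that touches $\varphi(\partial M)$ in \emph{two} (or more) components $\varphi(i_1),\varphi(i_2)$, and also to rule out a component of $P$ touching $\varphi(\partial M)$ in \emph{zero} components. For the ``zero'' case: a component $C$ of $P$ disjoint from $\varphi(\partial M)$ would be relatively open in $M'$ (its closure cannot leave $P$ without crossing $\varphi(\partial M)=\partial V\cap M'$), hence clopen, hence all of $M'$, again contradicting $V\neq\emptyset$. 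For the ``$\geq 2$'' case, I would argue as follows. Collapsing: let $W := V \cup C$ (or rather $V$ together with the component $C$ and the boundary pieces $\varphi(i_1),\varphi(i_2)$ of $M$ lying between them). Since $\varphi(\Int M)$ is the interior of a compact connected manifold-with-boundary whose boundary is $\partial M = \bigsqcup_{i\in I} i$, gluing a collar and using that $C$ connects $\varphi(i_1)$ to $\varphi(i_2)$, one can build inside $M'$ an embedded loop that enters $V$, crosses from near $\varphi(i_1)$ through $\Int M$ to near $\varphi(i_2)$, and closes up through $C$. The claim is that this loop is non-contractible in $M'$. To see this, pair it with a separating $(n-1)$-cycle: the component $\varphi(i_1)$, regarded as an $(n-1)$-cycle in $M'$, is null-homologous (since $M'$ is $1$-connected one might instead use a de Rham / intersection-number argument), and the algebraic intersection number of our loop with $\varphi(i_1)$ is $\pm 1$, because the loop crosses $\varphi(i_1)$ exactly once transversally (once going into $V$, with no compensating crossing since it returns via $C$ on the other side). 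An algebraic intersection number $\pm 1$ with a cycle forces the loop to be homologically, hence (being a loop, via Hurewicz and $\pi_1(M')=1$ giving $H_1(M')=0$) \ldots wait, that would make it null-homologous; the contradiction is rather that $\varphi(i_1)$ \emph{must} then be non-separating / have nonzero class, contradicting $H_{n-1}$ vanishing or contradicting $1$-connectedness via the standard fact that in a $1$-connected manifold a codimension-$1$ submanifold with $\pm1$ self-linking cannot exist. I would phrase this cleanly as: the mod-$2$ intersection pairing $H_1(M';\Z/2)\times H_{n-1}(M';\Z/2)\to\Z/2$ would be nontrivial on these classes, but $H_1(M';\Z/2)=0$ since $M'$ is $1$-connected — contradiction.

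The main obstacle I anticipate is making the intersection-theoretic step rigorous with boundary: $M'$ has boundary, so ordinary Poincaré duality must be replaced by Lefschetz duality, and one must be careful that the loop and the hypersurface $\varphi(i_1)$ sit in the interior or are handled relative to $\partial M'$. A cleaner route, which I would likely take in the write-up, is cohomological: choose a smooth bump function $f$ on $M'$ equal to $0$ on one side of $\varphi(i_1)$ and $1$ on the other (possible precisely when $\varphi(i_1)$ separates a neighborhood but one wants it \emph{globally}); the obstruction to extending $f$ globally is exactly a class in $H^1(M';\Z)\cong\mathrm{Hom}(\pi_1(M'),\Z)=0$. Thus $f$ extends, so $\varphi(i_1)$ separates $M'$ into two pieces $M'_0, M'_1$. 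Now run the connectedness bookkeeping: $\Int M\setminus i_1$ has some components; $C$ lies in one of $M'_0,M'_1$; tracing which components of $\partial M$ and which parts of $V$ land in $M'_0$ versus $M'_1$, the hypothesis that $C$ links two distinct $\varphi(i_1),\varphi(i_2)$ together with connectedness of $M$ produces the contradiction directly, without explicit loops. I would set up this separation argument carefully, induct on $|I|$ if needed, and conclude $|J\cap I|=1$. Finally, I would note that surjectivity of $J\mapsto J\cap I$ onto $\{$components of $\partial M\}$ follows because every $\varphi(i)$, $i\in I$, lies on the boundary of $P$ hence in some component, i.e.\ in some $J$; combined with $|J\cap I|=1$ this gives the bijection, completing the proof.
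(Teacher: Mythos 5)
Your overall strategy --- build a loop that runs once through a component $C$ of $P$ joining $\varphi(i_1)$ to $\varphi(i_2)$ and once back through $\varphi(M)$, then show it meets a closed separating hypersurface an odd number of times, contradicting $\pi_1(M')=1$ --- is the standard one, and is presumably close to the argument of \cite[Lemma 47(i)]{JZ21}, which the paper simply cites rather than reproducing. Your preliminary step is fine: since $M$ is compact, the frontier of $P$ in $M'$ is contained in $\varphi(\partial M)$, so a component of $P$ disjoint from $\varphi(\partial M)$ would be clopen in the connected manifold $M'$, forcing $M=\emptyset$; this gives $|J\cap I|\geq 1$ for every $J$.

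The gap is in the step $|J\cap I|\leq 1$, and it is exactly the obstacle you flag but do not resolve: the hypersurface $\varphi(i_1)$ along which you want to intersect or separate may meet, or even be contained in, $\partial M'$ (nothing in the hypotheses prevents $\varphi(\partial M)\cap\partial M'\neq\emptyset$; think of an annulus embedded in a disc with its outer boundary equal to the boundary of the disc). In that case $\varphi(i_1)$ is not a closed hypersurface in the interior of $M'$, the mod-$2$ dual class you invoke is not defined as you use it, and the separation statement can simply fail ($M'$ minus a boundary component is still connected). Moreover, even when $\varphi(i_1)\subseteq\mathrm{Int}(M')$, the $C$-segment of your loop lies in $P\supseteq\varphi(i_1)$, so ``crosses $\varphi(i_1)$ exactly once transversally'' is not justified as written. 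Both problems are repaired by one missing idea: push $i_1$ into the interior along a collar and work with $S_\eps:=\varphi(i_1\times\{\eps\})\subseteq\varphi(\mathrm{Int}(M))\subseteq\mathrm{Int}(M')$. This is a closed two-sided hypersurface disjoint from $P$, hence from $C$; a Mayer--Vietoris computation using $H_1(M';\Z/2)=0$ shows that $M'\setminus S_\eps$ has exactly two components, one containing $\varphi(i_1\times[0,\eps))$ and the other containing $\varphi\bigl(M\setminus(i_1\times[0,\eps])\bigr)$ --- the latter set being connected because $M$ deformation retracts onto it --- and therefore containing $\varphi(i_2)$. Since $C$ is connected, disjoint from $S_\eps$, and meets both $\varphi(i_1)$ and $\varphi(i_2)$, this is the desired contradiction; it also replaces the ``connectedness bookkeeping'' and the induction on $|I|$ that you leave unspecified. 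Without the pushed-in hypersurface (or an equivalent device) the argument as proposed does not go through.
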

\begin{proof}[Proof of Lemma \ref{lemma_property_partition}]
	This follows from the proof of Lemma 47(i) in \cite{JZ21}\footnote{In this lemma, it is assumed that $\partial M' \neq \emptyset$. However, this is not necessary.}.
\end{proof}

\begin{lemma}[helicity inequality]\label{lemma_helicity_inequality}
	Let $k, n \in \N$ with $n \geq 2$. Let $M$ and $M'$ be compact $kn$-dimensional smooth manifolds, and  $\omega$ and $\omega'$ be maxipotent exact $k$-forms on $M$ and $M'$, respectively. Let $a$ be a positive real number and $\varphi: M \to M'$ a smooth orientation-preserving  embedding that intertwines $a\omega$ and $\omega'$. We denote by $O$ and $O'$ the orientations of $M$ and $M'$ induced by $\omega$ and $\omega'$. Then, for every $J \in \mathcal{P}^{\varphi}$ the following inequality holds:
	\begin{equation*}
		-a^n \sum_{i \in J \cap I}h_{M, O, \omega}(i) + \sum_{i' \in J \cap I'}h_{M', O', \omega'}(i') \geq 0.
	\end{equation*}
\end{lemma}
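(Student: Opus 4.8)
The plan is to apply Stokes' theorem for helicity (Lemma \ref{lemma_stokes_helicity} in the appendix) to the manifold $P := M' \setminus \varphi(\Int(M))$ and the closed region it bounds, and then to keep careful track of orientations and scaling factors. First, I would observe that $P$ is a compact $kn$-dimensional manifold with boundary, and that its boundary decomposes as the disjoint union, over $J \in \partition{\varphi}$, of the pieces $\bigsqcup_{i \in J \cap I} \varphi(i)$ together with $\bigsqcup_{i' \in J \cap I'} i'$. Crucially, by Lemma \ref{lemma_property_partition} (applied with the hypotheses that $M, M'$ are compact, connected, equidimensional, $M \neq \emptyset$, and $M'$ is $1$-connected), for each $J$ the set $J \cap I$ is a singleton; this is what lets us isolate a single summand $h_{M,O,\omega}(i)$ in the stated inequality, rather than a sum over a larger index set. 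The components of $\partial P$ corresponding to a fixed $J$ are exactly those that are mutually connected through $P$, so they together bound a union of connected components of $P$; I would run the argument on each such union.

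Next I would set up the $k$-form on $P$. The form $\omega'$ restricts to an exact $k$-form on $P$ (exactness is inherited since $\omega'$ is exact on $M'$); let $\alpha'$ be a primitive. On the components of $\partial P$ coming from $I'$, the pullback of $\omega'$ is $\omega'_{i'}$ with the orientation $O'_{i'}$ induced from $P$. On the components coming from $\varphi(i)$ for $i \in I$, the embedding $\varphi$ intertwines $a\omega$ with $\omega'$, so the pullback of $\omega'$ to $\varphi(i)$ is identified via $\varphi$ with $a\,\omega_i$; the helicity of $(\varphi(i), \cdot, a\omega_i)$ is therefore $a^{n}\, h(i, \cdot, \omega_i) = a^{n} h_{M,O,\omega}(i)$, using that helicity scales as $\sigma \mapsto t\sigma$ multiplies a primitive by $t$ and the $(n-1)$-fold wedge by $t^{n-1}$, giving an overall factor $t^{n}$. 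The sign requires care: the orientation induced on $\varphi(i)$ as a boundary component of $P$ is opposite to the one induced on $\varphi(i)$ as (the image of) a boundary component of $M$, because $M$ and $P$ lie on opposite sides of that hypersurface and $\varphi$ is orientation-preserving. This is the source of the minus sign in front of $a^{n}\sum_{i \in J \cap I} h_{M,O,\omega}(i)$.

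Then I would invoke Stokes' theorem for helicity: the total helicity of $\partial(P_J)$, computed with the boundary orientation, equals $\int_{P_J} \omega'^{\wedge n}$ up to the appropriate constant — more precisely, Lemma \ref{lemma_stokes_helicity} expresses $\sum_{\text{components of }\partial P_J} h(\cdot) $ as a multiple of $\int_{P_J,O_P} (\omega')^{\wedge n}$, where $P_J$ denotes the union of components of $P$ bounded by the $J$-th family and $O_P$ is the orientation induced on $P$ by $\omega'$ (note $\omega'$ is maxipotent, hence $(\omega')^{\wedge n}$ is a volume form and $O_P$ agrees on $P$ with $O'$ restricted to $M' \setminus \varphi(\Int M)$). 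Since $(\omega')^{\wedge n}$ is a nowhere-vanishing $kn$-form defining precisely that orientation, the integral $\int_{P_J, O_P} (\omega')^{\wedge n}$ is non-negative (in fact positive unless $P_J$ is empty). Combining the sign computation from the previous paragraph with this non-negativity yields
\begin{equation*}
	-a^{n}\sum_{i \in J \cap I} h_{M,O,\omega}(i) + \sum_{i' \in J \cap I'} h_{M', O', \omega'}(i') \;=\; (\text{positive constant}) \int_{P_J, O_P} (\omega')^{\wedge n} \;\geq\; 0,
\end{equation*}
which is the claim.

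The main obstacle I anticipate is the bookkeeping of orientations: verifying that the boundary orientation on $\varphi(i) \subset \partial P$ is the reverse of $O_i$ transported by $\varphi$, and that the boundary orientation on $i' \subset \partial P$ coincides with $O'_{i'}$, and that all of this is consistent with the global orientation on $P$ defined by $(\omega')^{\wedge n}$. I would handle this by a local computation in a collar neighborhood of each hypersurface, using that "induced boundary orientation" satisfies the outward-normal-first convention and that $\varphi$ is orientation-preserving by hypothesis. A secondary (minor) point is confirming that $\omega'$ restricted to $P$ is still exact and maxipotent so that Lemma \ref{lemma_stokes_helicity} and Definition \ref{def_helicity} apply verbatim; this is immediate since restriction of forms commutes with $d$ and with wedge powers.
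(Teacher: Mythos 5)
The paper does not actually prove this lemma: its ``proof'' is the single line ``This is Lemma 49 in \cite{JZ21}'', so there is no in-paper argument to compare against. Your reconstruction --- apply Stokes' theorem for helicity (Lemma \ref{lemma_stokes_helicity}) to each connected piece $P_J$ of $P = M'\setminus\varphi(\Int(M))$ singled out by the partition element $J$, observe that $\int_{P_J}(\omega')^{\wedge n}\geq 0$ because $(\omega')^{\wedge n}$ is the very volume form defining the orientation, and convert the boundary terms using the orientation reversal on $\varphi(\partial M)\subseteq\partial P$ together with the scaling $h(N,O,a\sigma)=a^{n}h(N,O,\sigma)$ --- is the natural argument and is, in substance, what the cited lemma does. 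The sign and scaling bookkeeping you describe is correct, and the ``positive constant'' in your displayed identity is in fact $1$, since Lemma \ref{lemma_stokes_helicity} is an equality with no extra factor.

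Two points should be repaired. First, drop the appeal to Lemma \ref{lemma_property_partition}: the helicity inequality assumes neither that $M'$ is connected nor that it is $1$-connected, so that lemma is not available here --- and you do not need it, since the statement already carries the full sum $\sum_{i\in J\cap I}$ and your Stokes computation handles it as a sum. Claiming $|J\cap I|=1$ at this stage both misreads the statement and imports hypotheses that only enter later, when the inequality is applied inside the proof of Theorem \ref{thm_sets_fulfill_conditions}. Second, your argument tacitly assumes $\varphi(\partial M)\subseteq\Int(M')$, so that $P$ is a smooth compact manifold with boundary $\varphi(\partial M)\sqcup\partial M'$ and the collar/outward-normal analysis on the two sides of each hypersurface goes through. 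The lemma imposes no boundary condition on the embedding (see the footnote in Definition \ref{def_univeral_from_cat}), so the case where $\varphi(\partial M)$ meets $\partial M'$ --- in the extreme, $\varphi$ surjective, where $P$ has empty interior and is not a $kn$-manifold with boundary --- needs separate treatment; there the components of $\partial M$ and $\partial M'$ that coincide carry the same induced orientation and pull back to forms differing by the factor $a$, so the corresponding contribution to the left-hand side is $-a^{n}h+a^{n}h=0$ and the inequality still holds, but this requires its own short argument rather than the Stokes computation. Neither issue is fatal, but both must be addressed for the proof to cover the lemma as stated.
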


\begin{proof}
	This is Lemma 49 in \cite{JZ21}.
\end{proof}

\begin{lemma}\label{lemma_bdy_pt_mapped_to_interior}
	Let $X$ and $Y$ be topological manifolds, such that $X$ is compact and non-empty and $Y$ is connected. Let $\varphi:X \to Y$ be a continuous injective map, such that $\varphi(\partial X)\subseteq \partial Y$. Then $\varphi$ is surjective.
\end{lemma}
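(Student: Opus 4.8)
The plan is to show that $\varphi(X)$ is simultaneously closed and open in $Y$, so that connectedness of $Y$ forces $\varphi(X) = Y$. Compactness of $X$ together with continuity gives immediately that $\varphi(X)$ is compact, hence closed in the Hausdorff space $Y$. The heart of the argument is openness, and here the hypothesis $\varphi(\partial X) \subseteq \partial Y$ is exactly what is needed.

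To prove openness I would argue pointwise: let $y = \varphi(x) \in \varphi(X)$ and show $y$ is an interior point of $\varphi(X)$. Split into two cases according to whether $x \in \Int(X)$ or $x \in \partial X$. If $x \in \Int(X)$, then $x$ has a neighbourhood in $X$ homeomorphic to an open ball in $\R^n$ (where $n = \dim X = \dim Y$, both being manifolds mapped into one another by an injection — one should note $\dim X \le \dim Y$ from invariance of domain, and actually equality holds since otherwise $\varphi$ could not be injective on an open set near an interior point, or one simply takes $n=\dim X$ and uses that $Y$ has dimension $\ge n$ locally). Then invariance of domain applied to the composition with a chart of $Y$ — here we use that $\varphi(x) \in \Int(Y)$, which holds because if $\varphi(x) \in \partial Y$ then, $x$ being interior, a whole small ball around $x$ maps into $Y$, but no point of $\partial Y$ has a neighbourhood homeomorphic to $\R^n$, contradicting injectivity and invariance of domain — shows that the image of this ball is open in $Y$, giving an open neighbourhood of $y$ inside $\varphi(X)$. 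If instead $x \in \partial X$, then by hypothesis $\varphi(x) \in \partial Y$; one takes a boundary chart of $X$ at $x$ (a neighbourhood homeomorphic to a relatively open subset of a half-space $\mathbb{H}^n$) and a boundary chart of $Y$ at $\varphi(x)$, and applies the boundary version of invariance of domain (an injective continuous map from a relatively open subset of $\mathbb{H}^n$ to $\mathbb{H}^n$ sending boundary to boundary is open onto its image) to conclude again that $y$ lies in the interior of $\varphi(X)$.

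Thus $\varphi(X)$ is open and closed in $Y$; since $Y$ is connected and $\varphi(X) \neq \emptyset$ (as $X \neq \emptyset$), we get $\varphi(X) = Y$, i.e. $\varphi$ is surjective.

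The main obstacle is the careful bookkeeping around invariance of domain in the manifold-with-boundary setting: one must make sure that interior points of $X$ cannot map to $\partial Y$ (this is where injectivity plus invariance of domain is genuinely used, not just the hypothesis $\varphi(\partial X)\subseteq\partial Y$), and one must invoke the correct boundary analogue of invariance of domain for the case $x \in \partial X$. Everything else — compactness giving closedness, and the clopen-plus-connected conclusion — is routine. An alternative, essentially equivalent, route is to observe that $\varphi$ restricted to $\Int(X)$ is an open map into $\Int(Y)$ and $\varphi$ restricted to $\partial X$ is an open map of $(n-1)$-manifolds into $\partial Y$; patching these two open subsets of $Y$ shows $\varphi(X)$ is open, and I would likely present it in this slightly more structured form.
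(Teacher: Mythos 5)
Your argument is correct and shares the paper's skeleton ($\varphi(X)$ is closed by compactness, open by invariance of domain, hence equal to the connected $Y$), but the openness step is carried out differently. The paper forms the doubles $\widehat{X}$ and $\widehat{Y}$ along their boundaries --- boundaryless topological manifolds by \cite[Theorem 9.29]{Lee13} --- notes that the hypothesis $\varphi(\partial X)\subseteq\partial Y$ makes the induced map $\widehat{\varphi}$ well defined and (using injectivity of $\varphi$ together with invariance of domain, i.e.\ exactly the fact you also isolate, that interior points of $X$ cannot map to $\partial Y$) injective, and then applies ordinary invariance of domain once to $\widehat{\varphi}$ to get openness of $\varphi(X)$ in $Y$. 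You instead argue chartwise with two cases, invoking for boundary points a boundary version of invariance of domain; that lemma is true and is itself standardly proved by the same reflection/doubling trick, so the two proofs have essentially the same mathematical content, with the paper trading your chart bookkeeping for the doubling theorem. When you invoke the boundary version, the conclusion you actually need is that the image of a relatively open subset of the half-space is relatively open in the half-space, not merely that the map is ``open onto its image''. One further caveat: your justification of $\dim X=\dim Y$ does not work --- continuous injections from lower- into higher-dimensional manifolds certainly exist (a point into $\R$), and the lemma as literally stated is false without equal dimensions --- but equal dimension is an implicit hypothesis that the paper's own proof also uses silently through invariance of domain, and it holds in the one place the lemma is applied.
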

\begin{proof}
	Consider the equivalence relation $\sim$ on $X\sqcup X:=\left(\{0, 1\}\times X \right)$ given by $(i, x)\sim (i, x), \forall x\in X, i=0, 1$, and $(0, x)\sim(1, x), (1, x)\sim(0, x), \forall x \in \partial X$. We define $\widehat{X}$ to be the quotient \[X \sqcup X / \sim.\] We define $\widehat{Y}$ analogously. By \cite[Theorem 9.29]{Lee13}, $\widehat{X}$ and $\widehat{Y}$ are canonically topological manifolds without boundary. We denote by $\varphi \sqcup \varphi:X\sqcup X \to Y \sqcup Y$ the map induced by $\varphi$ on the disjoint union. We define
	\begin{align*}
		\widehat{\varphi}: \widehat{X} &\to \widehat{Y}\\
		[x] & \mapsto [(\varphi \sqcup \varphi)(x)].
	\end{align*}
	By our assumption that $\varphi(\partial X)\subseteq \partial Y$, this map is well-defined. Since $\varphi$ is continuous, $\widehat{\varphi}$ is continuous. By our assumption that $\varphi$ is continuous and injective and by Brouwer's invariance of domain theorem, it follows that $\widehat{\varphi}$ is injective. Since $\widehat{X}$ and $\widehat{Y}$ do not have boundary, it follows from the invariance of domain theorem that $\widehat{\varphi}(\widehat{X})$ is open in $\widehat{Y}$. This implies that $\varphi(X)$ is open in $Y$. Since $X$ is compact, $\varphi(X)$ is compact. Since $Y$ is Hausdorff, $\varphi(X)$ is closed in $Y$. Since $Y$ is connected and $X$ non-empty, it  follows that $\varphi(X)=Y$. This concludes the proof.
\end{proof}

\begin{proof}[Proof of Theorem \ref{thm_sets_fulfill_conditions}]\label{proof_thm_conditions}
	We show that $\mcat$ and $\mcattilde$ fulfill the conditions of Proposition \ref{prop_non_representable}. Conditions \ref{hyp_conn} and \ref{hyp_uncountable} hold by definition of $\mcat$. Condition \ref{hyp_no_rescaled_copy} holds because every element of $\mcat$ has volume 1, but no two elements of $\mcat$ are isomorphic.
	
	Condition \ref{hyp_small_sup} follows from:
	\begin{claim}\label{claim_embedding_cap_is_small}
		 For every $M\in\mcat, \wtilde{M}\in\mcattilde$, we have that
		\begin{equation*}
			c_{\wtilde{M}}(M)\leq a_0:=\sqrt[n]{\frac{3}{4}}.
		\end{equation*}
	\end{claim}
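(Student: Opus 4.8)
The plan is as follows. Fix $M=(M,\om)\in\mcat$ and $\wtilde M=(\wtilde M,\wtilde\om)\in\mcattilde$. Since $c_{\wtilde M}(M)=c_{\wtilde M}^{\obj,\Forms{kn}{k}}(M)$ is by definition the supremum of those $a>0$ for which there is a $\Forms{kn}{k}$-morphism $a\wtilde M\to M$ (and is $0$ if there are none), it suffices to show that every such $a$ satisfies $a^n\le\tfrac34$, i.e.\ $a\le a_0$. So I would fix such a morphism, i.e.\ a smooth embedding $\varphi\colon\wtilde M\to M$ with $\varphi^*\om=a\wtilde\om$, and bound $a$.

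First I would record two preliminary facts. Since $\varphi^*(\om^{\wedge n})=a^n\wtilde\om^{\wedge n}$ and $a^n>0$, the map $\varphi$ is orientation-preserving with respect to the orientations induced by $\wtilde\om$ and $\om$, so that Lemma~\ref{lemma_helicity_inequality} is applicable. Comparing volumes and using $\varphi(\wtilde M)\subseteq M$ gives $a^n\vol(\wtilde M)=\int_{\varphi(\wtilde M)}\om^{\wedge n}\le\vol(M)=1$, hence $a^n<1$ because $\vol(\wtilde M)>1$. Moreover, by Stokes' theorem for helicity (Lemma~\ref{lemma_stokes_helicity}), the three boundary helicities of $M$ sum to $\vol(M)=1$; as one of them equals $4$ and the other two are $\le-1$, those two sum to exactly $-3$. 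Write $\wtilde H\ge4$ for the positive boundary helicity of $\wtilde M$ and $\wtilde h_1,\wtilde h_2<0$ for the negative ones, so $\wtilde h_1+\wtilde h_2\ge-3$.

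The core of the argument uses the partition $\mathcal{P}^{\varphi}$ of $I_{\wtilde M}\sqcup I_M$. By Lemma~\ref{lemma_property_partition} (whose hypotheses hold: $\wtilde M$ and $M$ are compact, connected, of equal dimension $kn$, $\wtilde M\ne\emptyset$, and $M$ is $1$-connected) every block of $\mathcal{P}^{\varphi}$ contains exactly one component of $\partial\wtilde M$, so there are exactly three blocks $J_0,J_1,J_2$, labelled so that $J_0$ contains the $\partial\wtilde M$-component of helicity $\wtilde H$ and $J_1,J_2$ the ones of helicity $\wtilde h_1,\wtilde h_2$, while the three components $i_0,i_1,i_2$ of $\partial M$ (with $h_M(i_0)=4$ and $h_M(i_1),h_M(i_2)\le-1$) are distributed among the blocks. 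For each $p$, Lemma~\ref{lemma_helicity_inequality} gives $\sum_{i'\in J_p\cap I_M}h_M(i')\ge a^n\cdot(\text{the }\partial\wtilde M\text{-helicity of }J_p)$. Applied to $J_0$ this forces $i_0\in J_0$ (otherwise the left-hand side would be $\le0<4a^n$). I would then split into cases according to $J_0\cap\{i_1,i_2\}$: if this set contains some $i_j$, the $J_0$-inequality reads $4+h_M(i_j)\ge4a^n$ (resp.\ $4+h_M(i_1)+h_M(i_2)=1\ge4a^n$ if it contains both), and $h_M(i_j)\le-1$ yields $a^n\le\tfrac34$ immediately; if it is empty, then $i_1,i_2\in J_1\cup J_2$, so adding the $J_1$- and $J_2$-inequalities gives $-3=h_M(i_1)+h_M(i_2)\ge a^n(\wtilde h_1+\wtilde h_2)\ge-3a^n$, i.e.\ $a^n\ge1$, contradicting $a^n<1$; hence this last configuration cannot occur. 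In every case $a^n\le\tfrac34$, i.e.\ $a\le\sqrt[n]{3/4}=a_0$, and taking the supremum over all admissible $a$ gives $c_{\wtilde M}(M)\le a_0$.

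The step I expect to be the main obstacle is precisely the last case, where $J_0$ contains only the positively-charged boundary component $i_0$ of $M$: there the helicity inequality for the single block $J_0$ only yields $a^n\le1$, and to exclude it one genuinely needs to combine the helicity inequalities for $J_1$ and $J_2$ — which is where the exact value $-3$ of the sum of the negative boundary helicities of $M$, hence Stokes' theorem for helicity, is used — with the volume bound $a^n<1$. Everything else is bookkeeping with Lemmas~\ref{lemma_property_partition} and~\ref{lemma_helicity_inequality} (both taken from \cite{JZ21}) together with elementary computations with differential forms.
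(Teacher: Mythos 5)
Your proposal is correct and follows essentially the same route as the paper: it establishes via Lemma~\ref{lemma_property_partition} and the $J_0$-instance of Lemma~\ref{lemma_helicity_inequality} that the positive boundary components pair up, excludes the configuration where $J_0$ contains no negative-helicity component of $\partial M$ by summing the remaining helicity inequalities against the volume bound $a^n<1$ and the identity $h_M(i_1)+h_M(i_2)=-3$ (the paper's Claim~\ref{claim_exclude_partition}\ref{claim_J_0_neg_element}), and then extracts $a^n\le 3/4$ from the $J_0$-inequality. Your only deviations are presentational — organizing the argument as a case split rather than two sub-claims, and making explicit the orientation-preserving check and the use of Stokes' theorem for helicity to get the exact value $-3$, both of which the paper leaves implicit.
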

	\begin{proof}[Proof of Claim \ref{claim_embedding_cap_is_small}]
	Let $a\in(0, \infty)$ be such that there exists a morphism $\varphi: a\wtilde{M}\hookrightarrow M$.\footnote{If no such $a$ exists, then $c_{\wtilde{M}}(M)=0$ and the claim holds.}  We denote by $I$ and $\wtilde{I}$ the set of connected components of the boundary of $M$ and $\wtilde{M}$, respectively, by $i_0 \in I$ the boundary component of $M$ with helicity 4, by $\wtilde{i_0}\in \wtilde{I}$ the boundary component of $\wtilde{M}$ with positive helicity, and by $J_0$ the partition element containing $\wtilde{i_0}$.
	\begin{claim}\label{claim_exclude_partition}
		We have that
		\begin{enumerate}[label = (\roman*)]
		\item $i_0 \in J_0$, \label{claim_positve_with_positive}
		\item $J_0\setminus \{\wtilde{i_0}, i_0\}\neq\emptyset$. \label{claim_J_0_neg_element}
		\end{enumerate}
	\end{claim}
	We denote by $h: I \to \R$ and $\wtilde{h}: \wtilde{I} \to \R$ the boundary helicity maps for $M$ and $\wtilde{M}$ and by $I_-$ the set of boundary components of $M$ that have negative helicity. By Lemma \ref{lemma_property_partition} every partition element $J \in \mathcal{P}^{\varphi}$ (as defined in \eqref{eqn_partiton}) contains exactly one element of $\wtilde{I}$. Hence, Lemma \ref{lemma_helicity_inequality} with $J_0$ yields
	\begin{equation}\label{ineq_helicity_J_0}
		-a^n \wtilde{h}(\wtilde{i_0})+\sum_{i\in J_0\cap I} h(i)\geq 0.
	\end{equation}

	\begin{proof}[Proof of Claim \ref{claim_exclude_partition}]
		Since the first term on the left of \eqref{ineq_helicity_J_0} is negative, the sum on the left is positive, which implies that $i_0\in J_0$. This proves \ref{claim_positve_with_positive}.
		We denote by $\wtilde{I_-}$ the set of boundary components of $\wtilde{M}$ that have negative helicity. By our assumption on $\mcat$ and by definition of $\mcattilde$, we have that
		\begin{equation}\label{eqn_sum_neg_helicities}
		\sum_{i\in I_-}h(i)=-3 \leq \sum_{\wtilde{i}\in \wtilde{I_-}}\wtilde{h}(\wtilde{i}),
		\end{equation}
	 Since the volume of $\wtilde{M}$ is strictly bigger than that of $M$ and $a\wtilde{M} \hookrightarrow M$ via $\varphi$, we have that $a<1$. Combining this with equation \eqref{eqn_sum_neg_helicities}, it follows that
	\begin{equation}\label{ineq_neg_helicities}
		\sum_{i\in I_-}h(i) < a^n \sum_{\wtilde{i}\in \wtilde{I_-}}\wtilde{h}(\wtilde{i}).
	\end{equation}
	By using Lemma \ref{lemma_helicity_inequality} with every $J\in \partition{\varphi}\setminus \{J_0\}$, summing up the  inequalities, and using Claim \ref{claim_exclude_partition}\ref{claim_positve_with_positive}, we get
	\begin{equation*}
		-a^n \sum_{\wtilde{i}\in \wtilde{I_-}}\wtilde{h}(\wtilde{i})  + \sum_{i\in I_-\setminus\{J_0\}}h(i) \geq 0.
	\end{equation*}
	Combinig this with \eqref{ineq_neg_helicities}, it follows that
	\begin{equation*}
		\sum_{i\in I_-}h(i) < \sum_{i\in I_-\setminus\{J_0\}}h(i).
	\end{equation*}
	It follows that $J_0 \cap I_- \neq \emptyset$. Statement \ref{claim_J_0_neg_element} follows. This concludes the proof of Claim \ref{claim_exclude_partition}.
	\end{proof}
	
	By Lemma \ref{lemma_property_partition} applied with $\varphi: \wtilde{M}\to M$ and $J_0$ and the fact that $J_0\subseteq \wtilde{I}\sqcup I$, we have that $J_0\setminus \{\wtilde{i_0}\}=J_0\cap I$. It follows that
	\begin{align*}
		a^n &\leq \frac{h(i_0)+\sum_{i\in J_0\setminus\{i_0, \wtilde{i_0}\}}h(i)}{\wtilde{h}(\wtilde{i_0})}\tag{by \eqref{ineq_helicity_J_0} and Claim \ref{claim_exclude_partition}\ref{claim_positve_with_positive}}\\
		&\leq \frac{h(i_0)-1}{\wtilde{h}(\wtilde{i_0})} \tag{by Claim \ref{claim_exclude_partition}\ref{claim_J_0_neg_element} and the definition of $\mcat$}\\
		&\leq \frac{3}{4}\tag{since $\wtilde{h}(\wtilde{i_0})\geq 4 =h(i_0)$}
	\end{align*}
	The statement of Claim \ref{claim_embedding_cap_is_small} follows.
	\end{proof}
	
	We prove that condition \ref{hyp_nonsurjective_embedding} holds. Let $M\in\mcat$. Let $(Y, \Omega)$ be a connected object of $\Forms{kn}{k}$, such that $d\Omega=0$ and there exists a non-surjective morphism $\varphi: M \hookrightarrow Y$. By Lemma \ref{lemma_bdy_pt_mapped_to_interior}, we have that there exists a point $x_0\in \Int(Y)\cap \varphi(\partial M)$. Since $\varphi$ is a smooth embedding, there exists a smooth parametrization $\psi:\R^{kn} \to U$ for $\Int(Y)$ around $x_0$, such that $\Omega|_{U}$ is maxipotent, $\psi(0)=x_0$, $\psi(\R^{kn-1}\times\{0\})= U\cap\varphi(\partial M)$ and $\psi(\R^{kn-1}\times (-\infty, 0])=U\cap \varphi(M)$.
	
	For $m\in \N$, a point $p\in\R^m$ and $r>0$, we denote by $B_r^m(p)$ and $\overline{B}_r^m(p)$ the open and closed balls in $\R^m$ of radius $r$ around $p$. Let $\rho: B_1^{kn-1}(0)\to [0, \infty)$ be a smooth function with compact support, such that $\rho(0)>0$.
	
	We define
	\begin{align*}
		K:=\big\{(x_1, \dots, x_{kn})\in\R^{kn}\,\big|\, &(x_1, \dots, x_{kn-1})\in \overline{B}^{kn-1}_1(0)\text{ and }\\
		&0\leq x_{kn}\leq\rho(x_1, \dots, x_{kn-1})\big\}.
	\end{align*}
	The set $K$ is closed and bounded in $\R^{kn}$ and hence compact. We define
	\begin{equation}\label{eqn_def_M_with_bulge}
		\wtilde{M}:=\psi(K)\cup\varphi(M).
	\end{equation}
	This is a submanifold with boundary of $Y$. We equip $\wtilde{M}$ with the restriction of $\Omega$. By \cite[p.12, footnote 28]{JZ21}, there exists a manifold $\wtilde{M}_0 \in \Forms{kn}{k}_0$ which is isomorphic to $\wtilde{M}$.
	
	\begin{claim}\label{claim_M_0_tilde_in_mcattilde}
		We have that $\wtilde{M}_0 \in \mcattilde$.
	\end{claim}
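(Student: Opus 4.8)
The plan is to verify that $\wtilde M$ satisfies every defining condition of $\mcattilde$; since these conditions are invariant under $\Forms{kn}{k}$-isomorphism and $\wtilde M_0\cong\wtilde M$, this suffices, the membership of $\wtilde M_0$ in $\obj_0^{kn,k}$ being part of its choice via \cite[p.12, footnote 28]{JZ21}. Concretely, I must show that $\wtilde M$ is a compact, $1$-connected, exact, maxipotent $kn$-dimensional manifold carrying a $k$-form, of induced volume strictly greater than $1$, with exactly three boundary components, one of helicity at least $4$ and two of negative helicity summing to at least $-3$.

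First I would establish the topological properties. Since $\rho$ has compact support inside the open ball $B_1^{kn-1}(0)$, its graph coincides with the flat disk near $\partial\overline{B}_1^{kn-1}(0)$, so $K\cup(\R^{kn-1}\times(-\infty,0])$ is a smooth manifold with boundary; applying $\psi$ shows $\wtilde M$ is a smooth submanifold with boundary of $Y$, compact as the union of the compact sets $\varphi(M)$ and $\psi(K)$. The map $(x',x_{kn})\mapsto(x',\min(x_{kn},0))$ is a deformation retraction of $K\cup(\R^{kn-1}\times(-\infty,0])$ onto $\R^{kn-1}\times(-\infty,0]$, which transports through $\psi$ and extends by the identity on $\varphi(M)$ to a deformation retraction of $\wtilde M$ onto $\varphi(M)$. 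As $\varphi(M)\cong M$ is $1$-connected, so is $\wtilde M$. As $d\Omega=0$, the form $\Omega|_{\wtilde M}$ is closed, and the resulting homotopy equivalence $\varphi(M)\hookrightarrow\wtilde M$ induces an isomorphism on $k$-th de Rham cohomology carrying $[\Omega|_{\wtilde M}]$ to $[\Omega|_{\varphi(M)}]$, which vanishes because $\varphi^{*}\Omega=\omega$ is exact; hence $\Omega|_{\wtilde M}$ is exact. Finally $\Omega^{\wedge n}$ is nowhere zero on $\varphi(M)$ (since $\varphi^{*}(\Omega^{\wedge n})=\omega^{\wedge n}$ and $\omega$ is maxipotent) and on $U\supseteq\psi(K)$ (by the choice of $\psi$), hence on all of $\wtilde M$, so $\Omega|_{\wtilde M}$ is maxipotent and induces an orientation on $\wtilde M$.

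Next I would compute the volume, count the boundary components, and track the helicities. Since $\varphi(M)\cap\psi(K)=\psi(\overline{B}_1^{kn-1}(0)\times\{0\})$ has measure zero in $\wtilde M$, the induced volume satisfies $\vol(\wtilde M)=\vol(M)+v$, where $v:=\vol(\psi(K),\Omega|_{\psi(K)})>0$, the positivity holding because $\rho(0)>0$ forces $\Int(K)\neq\emptyset$; thus $\vol(\wtilde M)>1$. The set $U$ meets $\varphi(\partial M)$ only in the single boundary component $i_*$ of $M$ through $\varphi^{-1}(x_0)$, because $\psi(\R^{kn-1}\times\{0\})=U\cap\varphi(\partial M)$ is connected; so $\wtilde M$ coincides with $\varphi(M)$ near the two remaining components $j_1,j_2$ of $\partial M$, which are therefore unchanged, while $i_*$ is replaced by a connected component $\wtilde{i_*}$, obtained from $\varphi(i_*)$ by exchanging a flat disk for the graph of $\rho$. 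Hence $\partial\wtilde M$ has exactly three components. Stokes' theorem for helicity (Lemma \ref{lemma_stokes_helicity}), applied to $M$ and to $\wtilde M$, gives $\sum_i h_M(i)=\vol(M)=1$ and $\sum_{\wtilde i}h_{\wtilde M}(\wtilde i)=\vol(\wtilde M)=1+v$; subtracting the unchanged contributions $h_{\wtilde M}(j_1)+h_{\wtilde M}(j_2)=h_M(j_1)+h_M(j_2)=1-h_M(i_*)$ gives $h_{\wtilde M}(\wtilde{i_*})=h_M(i_*)+v$.

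To conclude I would first constrain the still-free datum $\rho$ by requiring $v<1$; this is harmless, since replacing $\rho$ by a sufficiently small positive multiple of itself makes $v$ arbitrarily small. By the hypotheses on $\mcat$ together with Stokes for helicity, the three boundary helicities of $M$ are $4$ and two numbers $p,q\le-1$ with $p+q=-3$. If $x_0$ lies on the helicity-$4$ component, then $\wtilde M$ has boundary helicities $4+v$, $p$, $q$, where $4+v\ge4$, while $p,q<0$ and $p+q=-3$. If $x_0$ lies on a negative component, say with helicity $p$, then $\wtilde M$ has boundary helicities $4$, $p+v$, $q$; here $p+v<0$ since $v<1\le-p$, while $4\ge4$ and $(p+v)+q=-3+v>-3$. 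In both cases $\wtilde M$, hence $\wtilde M_0$, satisfies every condition of $\mcattilde$, which proves the claim. I expect the main obstacle to be precisely this endgame: keeping track of which boundary component of $M$ is modified, and observing that the bulge volume must be kept below $1$ so that in the second case the modified component stays negative; everything else is routine bookkeeping.
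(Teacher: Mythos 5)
Your proof is correct and follows the same overall strategy as the paper (retract $\wtilde{M}$ onto $\varphi(M)$, then track volume and boundary helicities via Stokes' theorem for helicity), but it differs in two places worth noting. First, for exactness of $\Omega|_{\wtilde{M}}$ the paper builds a \emph{smooth} homotopy (using a cutoff function) and applies the general Cartan formula (Proposition \ref{prop_general_Cartan}) to produce an explicit primitive; you instead use the continuous retraction $(x',x_{kn})\mapsto(x',\min(x_{kn},0))$ and invoke homotopy invariance of de Rham cohomology. That works, but since your retraction is only continuous you are implicitly routing through the de Rham theorem (or a smoothing argument) rather than staying within smooth-category tools -- the paper's Cartan-formula argument is more self-contained on this point. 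Second, and more interestingly, you impose the extra normalization $v<1$ on the bulge volume, which the paper's proof does not state: if $x_0$ happens to lie on a boundary component of helicity $p\le -1$ and the bulge volume satisfies $v\ge -p$, then that component's helicity becomes nonnegative and $\wtilde{M}$ no longer has two negative boundary components, so it fails to lie in $\mcattilde$. Your observation that $\rho$ may be rescaled to make $v$ arbitrarily small is exactly the right (and harmless) fix, and your case analysis on which component carries $x_0$ is a genuine tightening of the paper's one-line conclusion that ``the positive boundary component has helicity at least 4 and the negative helicities sum up to at least $-3$''. The remaining bookkeeping (compactness, maxipotency, the count of three boundary components, and the identity $h_{\wtilde{M}}(\wtilde{i_*})=h_M(i_*)+v$ via Lemma \ref{lemma_stokes_helicity}) matches the paper's argument.
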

	\begin{proof}
		We check the corresponding conditions for $\wtilde{M}$. Since both $\psi(K)$ and $\varphi(M)$ are compact, it follows that $\wtilde{M}$ is compact. 
		\begin{claim}\label{claim_1conn_exact}
			$\wtilde{M}$ is 1-connected and $\Omega|_{\wtilde{M}}$ is exact.
		\end{claim}
		\begin{proof}[Proof of Claim \ref{claim_1conn_exact}]
		We choose a smooth function $\eta: \R^{kn}\to [0, 1]$ that is equal to 1 on $K$ and vanishes outside an open neighborhood of $K$. We define
			\begin{align*}
			h:[0, 1] \times \R^{kn} &\to \R^{kn},\\
			(t,(x_1, \dots, x_{kn})) &\mapsto (x_1, \dots, x_{n-1}, x_{kn}-\eta(x_1, \dots, x_{kn})tx_{kn}).
		\end{align*}
		The map $h$ is smooth and maps $K$ to $\overline{B}^{kn-1}_1(0)$. We define
		\begin{align*}
			&f:[0,1] \times \wtilde{M} \to \wtilde{M}\\
			&f_t:=
			\begin{cases}
				\psi\circ h_t\circ \psi^{-1}&\text{on }U\\
				id &\text{otherwise}
			\end{cases}
		\end{align*}
			The map $f_1$ is a smooth homotopy equivalence between $\wtilde{M}$ and $M$. It follows that $\wtilde{M}$ has the same homotopy type as $M$, and hence is 1-connected.
			
			By Cartan's general magic formula (Proposition \ref{prop_general_Cartan}) and our assumption that $d\Omega=0$, there exists a smooth family $(\alpha_t)_{t\in [0, 1]}$ of $(k-1)$-forms on $\wtilde{M}$, such  that for every $t\in [0, 1]$, \[\frac{d}{dt}f_t^*\left(\Omega|_{\wtilde{M}}\right) = d\alpha_t.\]It follows that
			\begin{equation}\label{eqn_integration_general_cartan}
				f_1^*\left(\Omega|_{\wtilde{M}}\right)-f_0^*\left(\Omega|_{\wtilde{M}}\right) = \int_0^1d\alpha_t=d\int_{0}^{1}\alpha_t.
			\end{equation}
			By assumption, $\omega$ is exact on $M$. Therefore $\varphi_*\omega$ is exact on $\varphi(M)$. Since $\Omega|_{\varphi(M)}=\varphi_*\omega$ and $f_1(\wtilde{M})\subseteq \varphi(M)$, it follows that $f_1^*\left(\Omega|_{\wtilde{M}}\right)$ is exact. Using equation \eqref{eqn_integration_general_cartan}, it follows that $f_0^*\left(\Omega|_{\wtilde{M}}\right)$ is exact. Since $f_0=id$, we have $f_0^*\left(\Omega|_{\wtilde{M}}\right)=\Omega|_{\wtilde{M}}$. This proves Claim \ref{claim_1conn_exact}.
			\end{proof}
			
			By construction, $\Omega|_{\wtilde{M}}$ is maxipotent.
			
			We equip the boundary of $\wtilde{M}$ with the orientation induced by the form $\omega^{\wedge n}$ on $M$. We show that $\wtilde{M}$ has the right volume and boundary helicity. By construction, the volume of $\wtilde{M}$ is strictly greater than that of $M$, which is equal to 1. By Stokes' Theorem for helicity (Lemma \ref{lemma_stokes_helicity}), the helicity of the boundary component of $\wtilde{M}$ intersecting $\psi(K)$ has been increased by the difference between the volume of $\wtilde{M}$ and the volume of $M$, while the other boundary components of $\wtilde{M}$ have the same helicity as the corresponding boundary components of $M$. Hence, the positive boundary component has helicity at least 4 and the negative helicities sum up to at least -3.
			
			This proves Claim \ref{claim_M_0_tilde_in_mcattilde}.		
	\end{proof}
	By construction, $\wtilde{M}\subseteq Y$ and the inclusion is a morphism $(\wtilde{M}, \Omega|_{\wtilde{M}})\hookrightarrow (Y, \Omega)$. By Claim \ref{claim_M_0_tilde_in_mcattilde}, it follows that condition \ref{hyp_nonsurjective_embedding} holds. Thus, we may apply Proposition \ref{prop_non_representable}. It follows that $c_{\mcattilde}$ is not closedly target-representable. This concludes the proof of Theorem \ref{thm_sets_fulfill_conditions}.
\end{proof}

\begin{proof}[Proof of Proposition \ref{prop_non_representable}]\label{proof_prop_non_representable}
	Let $X$ be an object of $\Forms{m}{k}$, such that
	\begin{equation}\label{eqn_target_cap_finite}
	a_0:=\sup_{M\in\mcat}c^X(M) <\infty.\,\footnote{By Remark \ref{rmk_a_0}, we do not need to consider the case $a_0=\infty$.}
	\end{equation}
	We define
	\begin{equation}\label{eqn_epsilon}
	\varepsilon:= 1-\sup\left\{c_{\wtilde{M}}(M)\,\big|\, M\in\mcat, \wtilde{M}\in\mcattilde\right\}.
	\end{equation}
	By assumption \ref{hyp_small_sup}, we have $\varepsilon>0$. Let $M\in\mcat$. We have $c^X(M)\leq a_0$. Hence there exist $a_M\in(0, a_0+\varepsilon)$ and a morphism $\varphi_M: M\hookrightarrow a_MX$.
	
	Using hypothesis \ref{hyp_conn}, the map $M \mapsto \varphi_M$ induces a map
	\begin{align}
		\begin{split}\label{eqn_map_conn_comp}
			\mcat &\to \left\{\text{connected components of }X\right\}\\
			M &\mapsto \text{unique }Y \text{ containing } \varphi_M(M).
		\end{split}
	\end{align}
	Since $X$ is second countable, it has countably many connected components. Using hypothesis \ref{hyp_uncountable}, it follows that the map defined in \eqref{eqn_map_conn_comp} is not injective. Hence there exist $M, M'\in\mcat$ and a connected component $Y$ of $X$, such that $M\neq M'$ and $\varphi_M(M)$ and $\varphi_{M'}(M')$ are both included in $Y$.
	
	Assume $\varphi_{M'}$ is surjective. Since $M\neq M'$, hypothesis \ref{hyp_no_rescaled_copy} implies that $\varphi_{M'}^{-1}\circ\varphi_M$ is not an isomorphism from $a_MM$ to $ a_{M'}M'$. Since $\varphi_{M'}$ is an isomorphism, it follows that $\varphi_M$ is not an isomorphism, and hence is not surjective. It follows that $\varphi_M$ and $\varphi_{M'}$ are not both surjective.
	
	Without loss of generality, assume that $\varphi_M$ is not surjective. This map is a morphism $M\hookrightarrow a_M Y$. Hence by hypothesis \ref{hyp_nonsurjective_embedding}, there exists $\wtilde{M_0}\in\mcattilde$, such that $\wtilde{M_0}\hookrightarrow a_MY\subseteq a_MX$. It follows that
	\begin{equation}\label{eq_a_0}
		c^X(\wtilde{M_0})\leq a_M
	\end{equation}
	We also have
	\begin{equation}\label{eqn_embedding_cap_bigger1}
		c_{\mcattilde}(\wtilde{M_0})\geq c_{\wtilde{M_0}}(\wtilde{M_0})\geq 1.
	\end{equation}
	We compute
	\begin{align*}
		&c_{\mcattilde}(\wtilde{M_0})- c^X(\wtilde{M_0}) - \sup_{M\in\mcat}c_{\mcattilde}(M)+ \sup_{M\in\mcat}c^X(M)\\
		\geq &1 - a_M +\varepsilon-1+a_0\tag{by (\ref{eqn_embedding_cap_bigger1}, \ref{eq_a_0}, \ref{eqn_epsilon}, \ref{eqn_target_cap_finite})}\\
		>& 0 \tag{since $a_M \in (0, a_0+\varepsilon)$}.
	\end{align*}
	It follows that $c_{\mcattilde} \neq c^X$. This concludes the proof of Proposition \ref{prop_non_representable}.
\end{proof}

We now give the proofs of Propositions \ref{prop_disjoint_union_nonrepresentable} and \ref{prop_product_of_surfaces}. The following observation is important for both proofs.
\begin{remark}[image of a closed manifold under an embedding]\label{rmk_image_conn_comp}
	Let $M$ be a non-empty closed connected topological manifold and $N$ a topological manifold of the same dimension. Let $\varphi: M \hookrightarrow N$ be a topological embedding. The image of $\varphi$ is a connected component of $Y$. Indeed, by invariance of the domain, and using that $M$ has no boundary, we have that $\varphi(M)$ is open in $N$. Since $M$ is compact, $\varphi(M)$ is compact. Since $N$ is Hausdorff, it follows that $\varphi(M)$ is closed in $N$. Hence, since $M\neq\emptyset$, we have that $\varphi(M)$ is a connected component of $N$.
	\demo
\end{remark}

\begin{proof}[Proof of Proposition \ref{prop_disjoint_union_nonrepresentable}]\label{proof_disjoint_union}
	Let $X$ be an object of $\Forms{kn}{k}$, where we omit the differential form from the notation. We show that $c^X(aM\sqcup M)$ is infinite for some $a\in(0, \infty)$. We denote by $\pi_0(X)$ the set of (path-) connected components of $X$. We define the set
	\begin{equation*}
		S:=\left\{\left(a, b\right)\in(0, \infty)^2\,\Big|\, \frac{a}{b}M\hookrightarrow X, \frac{1}{b}M\hookrightarrow X\right\}.
	\end{equation*}
	\setcounter{claim}{0}
	\begin{claim}\label{claim_S_countable}
		The set $S$ is countable.
	\end{claim}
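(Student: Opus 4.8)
The plan is to bound the size of $S$ by that of the set $\pi_0(X)$ of connected components of $X$, which is countable because $X$ is second countable. The key input is a volume bookkeeping argument. Since $\omega$ is maxipotent, $\omega^{\wedge n}$ is a volume form on the non-empty compact manifold $M$, so $\vol(M):=\int_{M}\omega^{\wedge n}$, taken with respect to the orientation induced by $\omega^{\wedge n}$, is a finite and strictly positive number; and for every $c\in(0,\infty)$ the object $cM=(M,c\omega)$ is again compact and maxipotent, with $\vol(cM)=c^{n}\vol(M)$.

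First I would study the set $C:=\{c\in(0,\infty)\mid cM\hookrightarrow X\}$ and show it is countable. For each $c\in C$, fix an $\Forms{kn}{k}$-morphism $\varphi_{c}\colon cM\hookrightarrow X$; writing $\Omega$ for the form on $X$, the map $\varphi_{c}$ is an isomorphism from $cM$ onto $(\varphi_{c}(M),\Omega|_{\varphi_{c}(M)})$. By Remark \ref{rmk_image_conn_comp}, applied to the non-empty closed connected manifold underlying $cM$, the image $Y_{c}:=\varphi_{c}(M)$ is a connected component of $X$; in particular $Y_{c}$ is compact and, being isomorphic to $cM$, is maxipotent with $\vol(Y_{c})=\vol(cM)=c^{n}\vol(M)$. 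Hence $c\mapsto Y_{c}$ defines a map $C\to\pi_0(X)$, and it is injective: if $c,c'\in C$ satisfy $Y_{c}=Y_{c'}$, then $c^{n}\vol(M)=\vol(Y_{c})=\vol(Y_{c'})=(c')^{n}\vol(M)$, so $c=c'$ since $\vol(M)>0$ and $t\mapsto t^{n}$ is strictly increasing on $(0,\infty)$ (recall $n\geq1$). As $\pi_0(X)$ is countable, $C$ is countable.

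Finally I would unwind the definition of $S$: a pair $(a,b)\in(0,\infty)^{2}$ lies in $S$ exactly when $\tfrac{1}{b}\in C$ and $\tfrac{a}{b}\in C$. The first condition restricts $b$ to the countable set $\{1/c\mid c\in C\}$, and, for each such fixed $b$, the second restricts $a$ to the countable set $b\cdot C$. Thus $S$ is a countable union of countable sets, hence countable.

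The argument is elementary and I do not anticipate a real obstacle; the only point requiring a little care is the assertion that the image of each embedding $\varphi_{c}$ is a full (hence compact) connected component of $X$, since the volume identity and the injectivity of $c\mapsto Y_c$ rest on it. This is exactly the content of Remark \ref{rmk_image_conn_comp} together with the maxipotency of $\omega$.
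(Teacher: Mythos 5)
Your argument is correct and is essentially the paper's: your set $C$ coincides (via Remark \ref{rmk_image_conn_comp}) with the paper's set $S'$ of rescaling factors $A$ with $AM$ isomorphic to a component of $X$, your volume computation $\vol(cM)=c^{n}\vol(M)$ is exactly how the paper bounds $S'$ by $\pi_0(X)$, and your ``countable union of countable sets'' step is just a repackaging of the paper's injection $S\to S'\times S'$, $(a,b)\mapsto(a/b,1/b)$. No gaps.
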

	\begin{proof}[Proof of Claim \ref{claim_S_countable}]
			We define the set
		\begin{equation*}
			S':= \left\{A \in (0, \infty)\,\big|\, \exists X_0\in\pi_0(X): AM \cong X_0\right\}
		\end{equation*}
		and the map
		\begin{align*}
			f: S &\to S' \times S',\\
			(a, b)&\mapsto \left(\frac{a}{b}, \frac{1}{b}\right).
		\end{align*}
		We show that the map $f$ is well-defined. Let $(a, b)\in S$.\footnote{If $S$ is empty, then Claim \ref{claim_S_countable} holds.} We choose a morphism $\varphi: \frac{a}{b}M \hookrightarrow X$. Since $M$ is non-empty, closed, and connected, by Remark \ref{rmk_image_conn_comp}, there exists a connected component $X_0$ of $X$, such that $\varphi\left(M\right)= X_0$. Hence $\varphi$ is an isomorphism between $\frac{a}{b}M$ and $X_0$. Similarly, there exists a connected component $X_1$ of $X$, such that $\frac{1}{b}M \cong X_1$. This implies that the map $f$ is well-defined.
		
		A straight-forward argument shows that $f$ is injective.
		
		Let $A \in S'$.\footnote{If $S'$ is empty, then $S$ is also empty and Claim \ref{claim_S_countable} holds.} We choose a connected component $X_0$ of $X$, such that $AM\cong X_0$. Since the form $\omega$ on $M$ is maxipotent, it follows that the form on $X$ restricted to $X_0$ is maxipotent. Using $AM\cong X_0$ again, we have that $A^n\vol(M)=\vol(X_0)$.\footnote{Recall that a maxipotent form induces a volume form. We denote by $\vol(M)$ the induced volume of $M$.} It follows that $S'$ is contained in the image of the map that sends each maxipotent connected component $X_0$ of $X$ to $\sqrt[n]{\frac{\vol(X_0)}{\vol(M)}}$. Since $X$ is a manifold, $\pi_0(X)$ is countable. It follows that $S'$ is countable. Since $f$ is injective,  it follows that $S$ is countable. This proves Claim \ref{claim_S_countable}.
	\end{proof}
	
	The set
	\begin{align*}
			&\left\{a\in (0, \infty)\,\Big|\,c^X(aM \sqcup M)<\infty \right\}\\
			=&\left\{a\in (0, \infty)\,\Big|\,\exists b\in (0, \infty): \frac{1}{b}\left(aM \sqcup M \right)\hookrightarrow X \right\}
	\end{align*}
	is included in the image of $S$ under the projection to the first component. Therefore, by Claim \ref{claim_S_countable}, it is countable. It follows that the complement of this set is uncountable, and hence non-empty. Hence $c^X(aM\sqcup M)$ is infinite for some $a\in (0, \infty)$. The statement of Proposition \ref{prop_disjoint_union_nonrepresentable} follows.
\end{proof}

\begin{proof}[Proof of Proposition \ref{prop_product_of_surfaces}]\label{proof_product}
	\setcounter{claim}{0}
	Let $X$ be an object of $\Forms{2k}{k}$. We denote by $\pi_0(X)$ the set of (path-) connected components of $X$. We define the set
	\begin{align*}
		S:=\left\{\left(a, b\right)\in(0, \infty)^2\,\Big|\,\frac{a}{b}M_0\times\frac{1}{b}M_1 \hookrightarrow X\right\}.
	\end{align*}
	\begin{claim}\label{claim_S_countable_products}
		The set $S$ is countable.
	\end{claim}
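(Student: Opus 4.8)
The plan is to adapt the proof of Claim~\ref{claim_S_countable} from Proposition~\ref{prop_disjoint_union_nonrepresentable}. A new ingredient is needed because here the volume of a product $(M_0\times M_1,\,c_0\sigma_0\oplus c_1\sigma_1)$ is proportional to $c_0c_1$, so it does not pin down the pair $(c_0,c_1)$ (for instance, when $M_0\cong M_1$ the objects attached to $(c_0,c_1)$ and to $(c_1,c_0)$ are isomorphic). The volume bookkeeping of the disjoint-union proof must therefore be upgraded to a degree argument.

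\emph{Reduction.} Since $M_0$ and $M_1$ are non-empty, closed and connected, so is the $2k$-manifold $M_0\times M_1$; hence by Remark~\ref{rmk_image_conn_comp} every morphism from a rescaled copy $(M_0\times M_1,\,c_0\sigma_0\oplus c_1\sigma_1)$ into $X$ is an isomorphism onto a connected component of $X$. The substitution $(a,b)\mapsto(a/b,1/b)$ is a bijection of $(0,\infty)^2$ onto itself and carries $S$ bijectively onto
\[
S':=\Big\{(c_0,c_1)\in(0,\infty)^2 \;\Big|\; (M_0\times M_1,\,c_0\sigma_0\oplus c_1\sigma_1)\cong X_0 \ \text{ for some } X_0\in\pi_0(X)\Big\}.
\]
So it suffices to show that $S'$ is countable. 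As $X$ is a (second countable) manifold, $\pi_0(X)$ is countable, hence it is enough to prove that for every $(a_0,a_1)\in(0,\infty)^2$ the set
\[
T_{(a_0,a_1)}:=\Big\{(c_0,c_1)\in(0,\infty)^2 \;\Big|\; (M_0\times M_1,\,c_0\sigma_0\oplus c_1\sigma_1)\cong(M_0\times M_1,\,a_0\sigma_0\oplus a_1\sigma_1)\Big\}
\]
is countable.

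\emph{Degree argument.} Let $(c_0,c_1)\in T_{(a_0,a_1)}$ and pick a diffeomorphism $\Phi$ of $N:=M_0\times M_1$ with $\Phi^*(a_0\sigma_0\oplus a_1\sigma_1)=c_0\sigma_0\oplus c_1\sigma_1$, where $\sigma_i$ also denotes the pullback of $\sigma_i$ to $N$. Orient $M_0$ and $M_1$ by $\sigma_0$ and $\sigma_1$. Fixing a point $\ast$ and integrating this identity of $k$-forms over the closed oriented $k$-submanifold $M_0\times\{\ast\}\subseteq N$ gives
\[
c_0 \;=\; \int_{M_0\times\{\ast\}}\!\big(c_0\sigma_0\oplus c_1\sigma_1\big) \;=\; \int_{M_0\times\{\ast\}}\!\Phi^*\big(a_0\sigma_0\oplus a_1\sigma_1\big) \;=\; a_0\deg g_0 + a_1\deg g_1,
\]
where $g_0:=\pi_0\circ\Phi|_{M_0\times\{\ast\}}:M_0\to M_0$ and $g_1:=\pi_1\circ\Phi|_{M_0\times\{\ast\}}:M_0\to M_1$ are smooth maps between closed oriented $k$-manifolds; here we used $\int_{M_0}\sigma_0=\int_{M_1}\sigma_1=1$, together with $\deg(\pi_0|_{M_0\times\{\ast\}})=1$ and $\deg(\pi_1|_{M_0\times\{\ast\}})=0$ (the latter since $k\geq1$). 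By the Degree Theorem, $\deg g_0,\deg g_1\in\Z$. Integrating instead over $\{\ast\}\times M_1$ yields in the same way $c_1=a_0\deg g_0'+a_1\deg g_1'$ with $\deg g_0',\deg g_1'\in\Z$. Therefore
\[
T_{(a_0,a_1)}\subseteq\big\{(p\,a_0+q\,a_1,\; p'a_0+q'a_1)\;\big|\;p,q,p',q'\in\Z\big\},
\]
which is countable. Hence $T_{(a_0,a_1)}$ is countable, and so $S'$ and $S$ are countable.

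\emph{Expected main obstacle.} The only delicate point is recognizing that the volume invariant — essentially all that the disjoint-union proof used — is insufficient here, and must be replaced by reading off the two scaling factors from an isomorphism as degrees of its ``slice maps'' $M_0\to M_0$, $M_0\to M_1$ (and symmetrically); integrality of these degrees, i.e. the Degree Theorem, is exactly what makes the relevant set of pairs countable. The remainder — the reduction, the use of Remark~\ref{rmk_image_conn_comp}, and the countability of $\pi_0(X)$ — is a routine transcription of the proof of Claim~\ref{claim_S_countable}.
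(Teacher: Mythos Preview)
Your proof is correct and follows essentially the same route as the paper: reduce to connected components via Remark~\ref{rmk_image_conn_comp} and the countability of $\pi_0(X)$, then apply the Degree Theorem to a slice inclusion $M_0\hookrightarrow M_0\times M_1$ to force the scaling factor into the lattice $a_0\Z+a_1\Z$. The only difference is that the paper uses the degree argument for just the first coordinate $a/b$ and then recovers $b$ (hence the second coordinate) from the volume of $X_0$, whereas you apply the degree argument symmetrically to both slices $M_0\times\{\ast\}$ and $\{\ast\}\times M_1$; both work, and your version is marginally more streamlined since it bypasses the volume step.
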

	\begin{proof}[Proof of Claim \ref{claim_S_countable_products}]
		By Remark \ref{rmk_image_conn_comp}, since $M_0$ and $M_1$ are non-empty, closed and connected, we have that
		\begin{equation}\label{eqn_isom_product}
			(a, b)\in S\implies\exists X_0\in\pi_0(X): \frac{a}{b}M_0\times\frac{1}{b}M_1 \cong X_0.
		\end{equation}
		Let $X_0$ be a connected component of $X$. We show that the set
		\begin{equation*}
			S_{X_0}:=\left\{(a, b)\in(0, \infty)^2\,\Big|\, \frac{a}{b}M_0\times\frac{1}{b}M_1 \cong X_0  \right\}
		\end{equation*}
		is countable. We define the function
		\begin{align*}
			f:=f_{X_0}: S_{X_0} &\to \R,\\
			(a, b) &\mapsto \frac{a}{b}.
		\end{align*}
		Assume that $S_{X_0}$ is non-empty.\footnote{Otherwise, $S_{X_0}$ is countable, which is what we want to show.} We choose $(a, b)\in S_{X_0}$. We have that $\frac{a}{b}M_0\times\frac{1}{b}M_1 \cong X_0$. Since $M_0$ and $M_1$ are maxipotent, it follows that the restriction of the form on $X$ to $X_0$ is maxipotent. We have that
\[b=\frac{f(a, b)}{\vol(X_0)}.\]
This implies that $f$ is injective. Moreover, we have that
		\begin{equation}\label{eqn_imf}
			\mathrm{im}(f)\subseteq \left(\frac{a}{b}\Z + \frac{1}{b}\Z\right).
		\end{equation}
		This follows from the fact that $\vol(M_i)=1$, for $i=0,1$, and from the following claim.
		\begin{claim}\label{claim_criterion_isom_product}
			If $a, b, a', b'\in(0, \infty)$ are such that $\frac{a}{b}M_0\times\frac{1}{b}M_1 \cong \frac{a'}{b'}M_0\times\frac{1}{b'}M_1$ then we have that
			\begin{equation*}
				\frac{a'}{b'} \in \frac{a}{b}\Z + \frac{1}{b}\Z.
			\end{equation*}
		\end{claim}
		\begin{proof}[Proof of Claim \ref{claim_criterion_isom_product}]
			We denote
			\begin{equation*}
				\omega:=\frac{a}{b}\sigma_0\oplus\frac{1}{b}\sigma_1 \text{ and } \omega':=\frac{a'}{b'}\sigma_0\oplus\frac{1}{b'}\sigma_1,
			\end{equation*}
			and by $\mathrm{pr}_i:M_0\times M_1 \to M_i$  the canonical projection, $i=0, 1$. We choose an isomorphism $\varphi: \frac{a'}{b'}M_0\times\frac{1}{b'}M_1 \to \frac{a}{b}M_0\times\frac{1}{b}M_1$ with respect to $\omega'$ and $\omega$. We fix a point $x_1\in M_1$ and define
			\begin{align*}
				\iota_0 : M_0 &\to M_0 \times M_1\\
				x &\mapsto (x, x_1).
			\end{align*}
			We have that
			\begin{align*}
				&\frac{a'}{b'}= \vol\left(\frac{a'}{b'}\sigma_0\right)=\vol(\iota_0^*\omega')=\vol(\iota_0^*\varphi^*\omega)\\
				=&\int_{M_0}\left(\frac{a}{b}(\mathrm{pr}_0\circ\varphi\circ\iota_0)^*\sigma_0+ \frac{1}{b}(\mathrm{pr}_1\circ\varphi\circ\iota_0)^*\sigma_1\right)\\
				=&\frac{a}{b}\mathrm{deg}(\mathrm{pr}_0\circ\varphi\circ\iota_0)\int_{M_0}\sigma_0+\frac{1}{b}\mathrm{deg}(\mathrm{pr}_1\circ\varphi\circ\iota_0)\int_{M_1}\sigma_1 \tag{Degree Thm}\\
				\in&\frac{a}{b}\Z+\frac{1}{b}\Z\tag{since $\sigma_0$ and $\sigma_1$ have volume 1}
			\end{align*}
			This proves Claim \ref{claim_criterion_isom_product}.
		\end{proof}
		By \eqref{eqn_imf} $f$ has a countable image. Since $f$ is injective, it follows that $S_{X_0}$ is countable, as desired. Since $\pi_0(X)$ is countable, it follows that $\bigcup_{X_0\in\pi_0(X)}S_{X_0}$ is countable. Using \eqref{eqn_isom_product}, it follows that $S$ is countable. This proves Claim \ref{claim_S_countable_products}.
	\end{proof}
	
	We have
	\begin{align*}
		&\left\{a \in (0, \infty)\,\big|\, c^X(aM_0\times M_1)<\infty\right\}\\
		=&\left\{a\in(0, \infty)\,\big|\, \exists b\in(0, \infty): (a, b)\in S\right\}.
	\end{align*}
	By Claim \ref{claim_S_countable_products}, this set is countable. Hence its complement is uncountable and thus non-empty. Hence $c^X$ is not finite on every product $(aM_0\times M_1), a\in (0, \infty)$. The statement of Proposition \ref{prop_product_of_surfaces} follows.
\end{proof}

\appendix
\section{General Cartan formula and Stokes' Theorem for helicity}\label{section_appendix}
We state the general Cartan formula that we used in the proof of Theorem \ref{thm_sets_fulfill_conditions}.

Let $I$ be an interval, $M, N$ smooth manifolds, $f: I\times M \to N$ a smooth function, $k \in \N$, and $\omega \in \Omega^k(N)$. For every $t \in I$, we define $\alpha^t \in \Omega^{k-1}(M)$ and $\beta^t \in \Omega^k(M)$ by
\begin{align*}
	\alpha^t_x(v_1, \dots, v_{k-1}) &:= \omega\big(\partial_t f_t(x), Df_t(x)v_1, \dots, Df_t(x)v_{k-1}\big),\\
	\beta^t_x(v_1, \dots, v_k) &:= d\omega\big(\partial_t f_t(x), Df_t(x)v_1, \dots, Df_t(x)v_k\big).\\
\end{align*}
\begin{prop}[General Cartan formula]\label{prop_general_Cartan}
	We have \[\frac{d}{dt}f_t^*\omega=\beta^t+d\alpha^t, \quad \forall t\in I.\]
\end{prop}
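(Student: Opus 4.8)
The plan is to reduce the statement to the classical Cartan magic formula $\mathcal{L}_Z = \iota_Z\circ d + d\circ\iota_Z$ by a suspension trick on $I\times M$. For $t\in I$ let $j_t\colon M\to I\times M$ be the inclusion $x\mapsto(t,x)$, so that $f_t=f\circ j_t$ and hence $f_t^*\omega=j_t^*\theta$, where $\theta:=f^*\omega\in\Omega^k(I\times M)$. The first step is to identify $\alpha^t$ and $\beta^t$ intrinsically: I claim that $\alpha^t=j_t^*(\iota_{\partial_t}\theta)$ and $\beta^t=j_t^*(\iota_{\partial_t}\,d\theta)$. This is a direct unwinding of the definitions: for $v_1,\dots,v_{k-1}\in T_xM$ one has $Dj_t(x)v_i=(0,v_i)$, while $Df(t,x)\partial_t=\partial_t f_t(x)$ and $Df(t,x)(0,v_i)=Df_t(x)v_i$; plugging these into $\theta=f^*\omega$ gives exactly the displayed formula for $\alpha^t$, and the same computation applied to $d\theta=f^*(d\omega)$ gives the one for $\beta^t$.

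The second step is the identity $\tfrac{d}{dt}j_t^*\theta=j_t^*(\mathcal{L}_{\partial_t}\theta)$. Let $\phi_s(t,x)=(t+s,x)$ denote the flow of the constant vector field $\partial_t$ on $I\times M$ (defined near any given point for $s$ in a small interval, which is all that is needed since we only differentiate at $s=0$ and everything here is pointwise/local). Then $j_{t+s}=\phi_s\circ j_t$, so $j_{t+s}^*\theta=j_t^*(\phi_s^*\theta)$; since $s\mapsto\phi_s^*\theta$ is a smooth one-parameter family of $k$-forms and pullback along the fixed smooth map $j_t$ is a continuous linear operation commuting with such derivatives, differentiating at $s=0$ yields $\tfrac{d}{dt}j_t^*\theta=j_t^*\bigl(\tfrac{d}{ds}\big|_{s=0}\phi_s^*\theta\bigr)=j_t^*(\mathcal{L}_{\partial_t}\theta)$.

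Finally I combine the two steps. By the classical Cartan formula $\mathcal{L}_{\partial_t}\theta=\iota_{\partial_t}\,d\theta+d(\iota_{\partial_t}\theta)$, and by naturality of the exterior derivative, $j_t^*\circ d=d\circ j_t^*$, so
\[
\frac{d}{dt}f_t^*\omega=\frac{d}{dt}j_t^*\theta=j_t^*(\iota_{\partial_t}\,d\theta)+d\bigl(j_t^*(\iota_{\partial_t}\theta)\bigr)=\beta^t+d\alpha^t,
\]
using the identifications from the first step. All the computations are routine; the only point needing a little care is the second step, namely justifying that $j_t^*\theta$ may be differentiated in $t$ by first differentiating $\phi_s^*\theta$ and then applying $j_t^*$ — which is the smooth-dependence/continuity remark above. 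One could alternatively give a purely local proof: choosing coordinates $(y^1,\dots,y^n)$ on $N$, write $\omega=\sum_H g_H\,dy^{h_1}\wedge\cdots\wedge dy^{h_k}$, expand $f_t^*\omega=\sum_H (g_H\circ f_t)\,df_t^{h_1}\wedge\cdots\wedge df_t^{h_k}$ with $f_t^j:=y^j\circ f_t$, differentiate with the Leibniz rule using $\tfrac{d}{dt}df_t^j=d(\partial_t f_t^j)$, and match terms against the stated formulas for $\alpha^t$ and $\beta^t$; the suspension argument merely organizes this bookkeeping invariantly.
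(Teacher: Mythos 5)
Your proof is correct, but it takes a genuinely different route from the paper's. The paper disposes of the proposition in one line, asserting that it ``follows from an elementary argument involving the Leibniz rule'' --- i.e.\ essentially the local-coordinate computation you sketch as an alternative in your last sentence: expand $f_t^*\omega$ in coordinates on $N$, differentiate term by term, and recognize $\beta^t+d\alpha^t$. Your main argument instead suspends the family to $\theta:=f^*\omega$ on $I\times M$, identifies $\alpha^t=j_t^*(\iota_{\partial_t}\theta)$ and $\beta^t=j_t^*(\iota_{\partial_t}\,d\theta)$ (both identifications check out against the paper's pointwise definitions, using $Df(t,x)(\partial_t,0)=\partial_t f_t(x)$, $Df(t,x)(0,v)=Df_t(x)v$, and $d\theta=f^*d\omega$), and then invokes the classical magic formula $\mathcal{L}_{\partial_t}=\iota_{\partial_t}\circ d+d\circ\iota_{\partial_t}$ together with $j_t^*\circ d=d\circ j_t^*$ and the flow identity $j_{t+s}=\phi_s\circ j_t$. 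What each approach buys: yours is coordinate-free and makes transparent why the formula is just the usual Cartan formula in disguise, at the cost of importing the classical statement and the (one-sided, if $I$ has endpoints) flow of $\partial_t$ as black boxes; the paper's is self-contained and elementary but is pure bookkeeping. The only point in your write-up that deserves the care you already give it is the interchange of $\frac{d}{ds}\big|_{s=0}$ with $j_t^*$, which is fine since $j_t^*$ acts fiberwise linearly on a smooth family of forms; at endpoints of $I$ the derivatives are one-sided, which matches the one-sided flow of $\partial_t$ there.
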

This result follows from an elementary argument involving the Leibniz rule.

We state and prove Stokes' theorem for helicity, which we used in Example \ref{exple_rounded_rectangle} and the proof of Theorem \ref{thm_sets_fulfill_conditions}. Let $k, n \in \mathbb{N}$, such that $n \geq 2$. Let $(M, O)$ be a compact, oriented manifold of dimension $kn$, and $\omega$ be an exact $k$-form on $M$. We denote by $\omega_{\partial M}$ the pullback of $\omega$ by the canonical inclusion of $\partial M$ into $M$ and by $O_{\partial M}$ the orientation of $\partial M$ induced by $\omega^{\wedge n}$.

\begin{lemma}[Stokes' Theorem for helicity]\label{lemma_stokes_helicity}
	The following equality holds:
	\begin{equation*}
		\int_{M, O} \omega^{\wedge n} =h(\partial M, O_{\partial M}, \omega_{\partial M}).
	\end{equation*}
\end{lemma}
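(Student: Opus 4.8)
The plan is to reduce the statement to the ordinary Stokes' theorem by exhibiting a suitable primitive on $M$. Since $\omega$ is exact, I would first fix a $(k-1)$-form $\alpha$ on $M$ with $d\alpha=\omega$, and consider the $(kn-1)$-form $\beta:=\alpha\wedge\omega^{\wedge(n-1)}$ on $M$ (note $n\geq 2$, so this is a genuine form, and a degree count gives $(k-1)+k(n-1)=kn-1$ as needed). The key computation is that $d\beta=\omega^{\wedge n}$: since $\omega=d\alpha$ is closed, $\omega^{\wedge(n-1)}$ is closed as well, so the Leibniz rule yields $d\beta=d\alpha\wedge\omega^{\wedge(n-1)}=\omega\wedge\omega^{\wedge(n-1)}=\omega^{\wedge n}$, the sign in the Leibniz rule being irrelevant because $d\omega^{\wedge(n-1)}=0$.

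Next I would apply the ordinary Stokes' theorem to $\beta$ on the compact oriented manifold $(M,O)$, obtaining $\int_{M,O}\omega^{\wedge n}=\int_{M,O}d\beta=\int_{\partial M,O_{\partial M}}\iota^*\beta$, where $\iota:\partial M\hookrightarrow M$ is the canonical inclusion and $O_{\partial M}$ is the induced boundary orientation (this is the orientation appearing in the statement of the lemma). Here one uses that $\partial M$ is compact and without boundary because $M$ is compact. Pulling back through $\iota$ and using that pullback commutes with wedge products, $\iota^*\beta=(\iota^*\alpha)\wedge(\iota^*\omega)^{\wedge(n-1)}=\alpha_{\partial M}\wedge\omega_{\partial M}^{\wedge(n-1)}$, where I write $\alpha_{\partial M}:=\iota^*\alpha$ and $\omega_{\partial M}=\iota^*\omega$ as in the statement.

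Finally I would observe that $\alpha_{\partial M}$ is a primitive of $\omega_{\partial M}$: $d\alpha_{\partial M}=d\iota^*\alpha=\iota^*d\alpha=\iota^*\omega=\omega_{\partial M}$. In particular $\omega_{\partial M}$ is exact, and since $\partial M$ is a closed $(kn-1)$-manifold, the helicity $h(\partial M,O_{\partial M},\omega_{\partial M})$ is defined (Definition \ref{def_helicity}); evaluating that definition with the particular primitive $\alpha_{\partial M}$ gives exactly $\int_{\partial M,O_{\partial M}}\alpha_{\partial M}\wedge\omega_{\partial M}^{\wedge(n-1)}$. Chaining the three identities then yields the claimed equality. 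There is essentially no serious obstacle here; the only points requiring mild care are the sign bookkeeping in the Leibniz rule (harmless, as noted) and verifying that the orientation $O_{\partial M}$ in the statement is precisely the induced boundary orientation used in Stokes' theorem, so that the two sides match with no spurious sign. The well-definedness of helicity, i.e. its independence of the chosen primitive (Lemma 21 in \cite{GZ22}), is not actually needed, since we simply use the primitive $\alpha_{\partial M}$ produced by the construction, but it reassures us that the right-hand side is unambiguous.
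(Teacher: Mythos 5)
Your proof is correct and is essentially the paper's own argument: both take a primitive $\alpha$ of $\omega$, observe that $\omega^{\wedge n}=d\bigl(\alpha\wedge\omega^{\wedge(n-1)}\bigr)$, and apply the ordinary Stokes' theorem, identifying the boundary integral with the helicity via the pulled-back primitive. Your write-up just spells out the Leibniz-rule and pullback steps that the paper leaves implicit.
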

\begin{proof}[Proof of Lemma \ref{lemma_stokes_helicity}]
	Let $\alpha$ be a primitive of $\omega$. Then, we have that $\omega^{\wedge n} = d\big(\alpha \wedge \omega^{\wedge (n-1)}\big)$. Using Stokes' Theorem, we get
	\begin{equation*}
		\int_{M, O}\omega^{\wedge n} = \int_{\partial M, O_{\partial M}} \alpha \wedge \omega^{\wedge (n-1)} = h(\partial M, O_{\partial M}, \omega_{\partial M}).
	\end{equation*}
	This completes the proof.
\end{proof}

\bibliographystyle{amsalpha}
\bibliography{mathematicalBibliography}

\providecommand{\bysame}{\leavevmode\hbox to3em{\hrulefill}\thinspace}
\providecommand{\MR}{\relax\ifhmode\unskip\space\fi MR }
\providecommand{\MRhref}[2]{%
  \href{http://www.ams.org/mathscinet-getitem?mr=#1}{#2}
}
\providecommand{\href}[2]{#2}
\begin{thebibliography}{CHLS07}

\bibitem[CHLS07]{CHLS07}
Kai Cieliebak, Helmut Hofer, Janko Latschev, and Felix Schlenk,
  \emph{Quantitative symplectic geometry}, Dynamics, Ergodic Theory, and
  Geometry (Boris Hasselblatt, ed.), Math. Sci. Res. Inst. Publ., vol.~54,
  Cambridge University Press, 2007.

\bibitem[GZ22]{GZ22}
Yann Guggisberg and Fabian Ziltener, \emph{Recognition of objects through
  symplectic capacities}, Differential Geometry and its Applications
  \textbf{84} (2022), 101903.

\bibitem[JZ21]{JZ21}
Du\v{s}an Joksimovi\'{c} and Fabian Ziltener, \emph{Generating systems and
  representability for symplectic capacities}, 2021.

\bibitem[Lee13]{Lee13}
John~M. Lee, \emph{Introduction to smooth manifolds}, 2 ed., Springer New York,
  NY, 2013.

\end{thebibliography}

\end{document}